\providecommand{\U}[1]{\protect\rule{.1in}{.1in}}
\providecommand{\U}[1]{\protect\rule{.1in}{.1in}}
\newtheorem{theorem}{Theorem}[section]
\newtheorem{lemma}[theorem]{Lemma}
\theoremstyle{definition}
\theoremstyle{remark}
\newtheorem{remark}[theorem]{Remark}
\numberwithin{equation}{section}
\newcommand{\ra}{\rightarrow}
\renewcommand{\O}{{\mathcal O}}
\begin{document}
\title[Sommerfeld Paradox]{A Resolution of the Sommerfeld Paradox}
\author{Y. Charles Li}
\address{Department of Mathematics, University of Missouri, Columbia, MO 65211, USA}
\email{liyan@missouri.edu}
\author{Zhiwu Lin}
\address{School of Mathematics, Georgia Institute of Technology, Atlanta, GA 30332, USA}
\email{zlin@math.gatech.edu}
\thanks{}
\subjclass{Primary 76, 35, 37; Secondary 34}
\date{}
\dedicatory{ }\keywords{Sommerfeld paradox, Couette flow, shear flow, Rayleigh equation,
Orr-Sommerfeld equation, Navier-Stokes equation}

\begin{abstract}
Sommerfeld paradox roughly says that mathematically Couette linear shear is
linearly stable for all Reynolds number, but experimentally arbitrarily small
perturbations can induce the transition from the linear shear to turbulence
when the Reynolds number is large enough. The main idea of our resolution of
this paradox is to show that there is a sequence of linearly unstable shears
which approaches the linear shear in the kinetic energy norm but not in the
enstrophy (vorticity) norm. These oscillatory shears are single Fourier modes
in the Fourier series of all the shears. In experiments, such linear
instabilities will manifest themselves as transient nonlinear growth leading
to the transition from the linear shear to turbulence no matter how small the
intitial perturbations to the linear shear are. Under the Euler dynamics,
these oscillatory shears are steady, and cat's eye structures bifurcate from
them as travelling waves. The 3D shears $U(y,z)$ in a neighborhood of these
oscillatory shears are linearly unstable too. Under the Navier-Stokes
dynamics, these oscillatory shears are not steady rather drifting slowly. When
these oscillatory shears are viewed as frozen, the corresponding
Orr-Sommerfeld operator has unstable eigenvalues which approach the
corresponding inviscid eigenvalues when the Reynolds number tends to infinity.
All the linear instabilities mentioned above offer a resolution to the
Sommerfeld paradox, and an initiator for the transition from the linear shear
to turbulence.

\end{abstract}
\maketitle
\tableofcontents

\section{Introduction}

The most influential paradox in fluids is the d'Alembert paradox saying that a
body moving through water has no drag as calculated by d'Alembert \cite{dAl52}
via inviscid theory, while experiments show that there is a substantial drag
on the body. The paradox splitted the field of fluids into two branches: 1.
Hydraulics --- observing phenomena without mathematical explanation, 2.
Theoretical Fluid Mechanics --- mathematically predicting phenomena that could
not be observed. A revolutionary development of the boundary layer theory by
Ludwig Prandtl in 1904 resolved the paradox by paying attention to the
substantial effect of small viscosity in the boundary layer. Prandtl's
boundary layer theory laid the foundation of modern unified fluid mechanics.

Sommerfeld paradox has the potential of being the next most influential
paradox in fluids. The paradox says that the linear shear in Couette flow is
linearly stable for all Reynolds numbers as first calculated by Sommerfeld
\cite{Som08}, but experiments show that any small perturbation size to the
linear shear can lead to the transition from the linear shear to turbulence
when the Reynolds number is large enough. This paradox is the key for
understanding turbulence inside the infinite dimensional phase space.
Dynamical system studies on the Navier-Stokes flow in an infinite dimensional
phase space is still at its developing stage. In this article, we shall
conduct such a study to offer a resolution to the Sommerfeld paradox. Linear
hydrodynamic stability is a classical subject, for a modern version with
dynamical system flavor, see \cite{SH01}.

Couette flow between two parallel horizontal plates is the simplest of all
classical fluid flows with boundary layers. It is one of the most fundamental
flows for understanding the transition to turbulence. Another basic flow is
pipe Poiseuille flow, which is also linearly stable for any Reynolds number as
shown by numerical computations. But Reynolds's famous experiment in 1883
showed the transition to turbulence for large Reynolds number. Indeed, there
are lots of similar features in the study of transient turbulence of plane
Couette flow and pipe Poiseuille flow \cite{Ker05}. We expect that some of our
studies for Couette flow in this paper could be useful for understanding the
turbulence of pipe Poiseuille flow.

The linear stability of Couette flow was first studied by Sommerfeld in 1908
\cite{Som08} using a single Fourier mode analysis to the linearized
Navier-Stokes equations. Sommerfeld found that all eigenvalues are
non-positive for all values of the Reynolds number, indicating the absence of
exponentially growing eigen modes, and concluded that the linear shear is
linearly stable for all Reynolds numbers. This fact was rigorously proved by
Romanov \cite{Rom73} who showed that all the eigenvalues are less than $-C/R$
where $R$ is the Reynolds number and $C$ is a positive constant; furthermore,
the linear shear is nonlinearly asymptotically stable in $L^{2}$ norm of
vorticity. On the other hand, experimentally no matter how small the initial
perturbation to the linear shear is, a transition to turbulence always occurs
when the Reynolds number is large enough.

Now we briefly comment on some previous attempts to explain Sommerfeld
paradox. One popular resolution, which was first suggested by Orr \cite{orr}
(see also \cite{treffeton}), is to use the non-normality of the linearized
Navier-Stokes operator to get algebraic growth of perturbations before their
final decay. (Note: non-normality refers to operators with non-orthogonal
eigenfunctions.) However, it is not clear how such linear algebraic growth
relates to the nonlinear dynamics (see \cite{waleffe-95}). Moreover, the
non-normality theory cannot explain many coherent structures observed in the
transient turbulence.

We believe that norms play a fundamental role in resolving the paradox.
Perturbations with large $L^{2}$ norm of vorticity may still have small
$L^{2}$ norm of velocity, i.e., small energy. In experimental or numerical
studies, such perturbations of small energy are still considered to be small,
but they are outside the vorticity's $L^{2}$ neighborhood of the linear shear
where Romanov's nonlinear stability result is valid \cite{Rom73}. Therefore,
such perturbations have the potential of being linearly unstable and
initiating the transition to turbulence.

The main idea of our resolution is to show the existence of a sequence of
linearly unstable shears which approach the linear shear in the velocity
variable but not in the vorticity variable. These shears are the single modes
of the Fourier series of all the 2D shears of the Couette flow $y + \sum
_{m=1}^{+\infty} c_{m} \sin(my)$. More precisely, our sequence of oscillatory
shears has the form
\begin{equation}
U_{n}\left(  y\right)  =y+\frac{A}{n}\sin(4n\pi y),\ \left(  \frac{1}{2}%
\frac{1}{4\pi}<A<\frac{1}{4\pi}\right)  . \label{formula-U_n}%
\end{equation}
As $n\rightarrow\infty$, the oscillatory shears approach the linear shear,
i.e. $U_{n}(y)\rightarrow y$ in $L^{2}$ and $L^{\infty}$. On the other hand,
in the vorticity variable, the oscillatory shears do not approach the linear
shear since $\partial_{y}U_{n}(y)=1+4A\pi\cos(4n\pi y)\not \ra 1$ in any
Lebesgue norm. Thus in the velocity variable, the oscillatory shears can be
viewed as the linear shear plus small noises. For any large $n$, we prove that
$U_{n}(y)$ is linearly unstable for both inviscid and slightly viscous fluids.
More precisely, in Theorems \ref{OST} and \ref{LOS}, it is shown that these
shears are linearly (exponentially) unstable for both Euler equations and
Navier-Stokes equations with large Reynolds numbers. Under the Navier-Stokes
dynamics, our shears are not steady rather drifting slowly. By viewing them as
frozen, we investigate the spectra of the corresponding linearized
Navier-Stokes operator (Orr-Sommerfeld operator). Moreover, numerical
simulations \cite{Lan09} indicate that its unstable growth rate does not
depend on $n$ substantially, implying that as $n\rightarrow\infty$, its
unstable growth rate does not shrink to zero rather approach a positive
number. Such a linear instability will generate the transition to turbulence
no matter how small the initial perturbation added to the linear shear is, as
long as the Reynolds number is large enough to realize the linear instability.
Numerical simulations \cite{Lan09} indicates that initial perturbations to
$U_{n}(y)$ indeed lead to a transient nonlinear growth. In fact, our new theory 
proposed in \cite{Lan09} claims that such a transient nonlinear growth induced
by slowly drifting states is the only mechanism for transition from the linear 
shear to turbulence.

Based upon comments from our colleagues, it is important to clarify a few
points here:

\begin{enumerate}
\item A more precise formulation of the Sommerfeld paradox is: One one hand,
the linear shear is linearly stable. On the other hand, experiments indicate
that for any $\varepsilon> 0$, there is a $\delta>0$ such that when the
Reynolds number is larger than $\frac{1}{\delta}$, there is a perturbation of
size $\varepsilon$ that leads to the transition from the linear shear to
turbulence. Due to various difficulties and lack of a precise mathematical
direction, experiments in the past were often less conclusive and confusing.
But recent experiments gradually converge to the conclusion of the precise
mathematical statement above.

\item It is fundamental to notice that our sequence of oscillatory shears
(\ref{formula-U_n}) is uniformly unstable for all positive integers $n$, i.e.
for \textit{any} $n=1,2,\cdots$, $U_{n}$ is linearly unstable.

\item Under the Navier-Stokes dynamics, our sequence of oscillatory shears
drifts as follows:
\begin{equation}
U_{n} (t,y) = y+ e^{-\epsilon(4n\pi)^{2}t}\frac{A}{n}\sin(4n\pi y).
\label{dros}%
\end{equation}
In terms of the above Sommerfeld paradox, for any $\varepsilon$, we can find a
$n$ such that $\frac{A}{n} < \varepsilon$, then there is a $\delta$ such that
when the Reynolds number is larger than $\frac{1}{\delta}$ (i.e. $\epsilon<
\delta$), the Orr-Sommerfeld (linear Navier-Stokes) operator has an unstable
eigenvalue at such a $U_{n}$ (\ref{formula-U_n}). Now we inspect the drifting
(\ref{dros}) for such $U_{n}$:
\[
n > \frac{A}{\varepsilon}, \ \epsilon< \delta.
\]
The dual effect of $n$ and $\epsilon$ above can prevent the drifting exponent
$\epsilon(4n\pi)^{2}$ to be too large, therefore prevent the oscillatory
component of $U_{n}$ to be quickly dissipated. In fact, When the Reynolds
number is larger enough (i.e. $\delta$ is small enough), the drifting exponent
$\epsilon(4n\pi)^{2}$ is very small, therefore the drifting and the
dissipation the oscillatory component are very slow. The linear instability of
$U_{n}$ then has plenty of time to be amplified, leading to the observations
in the experiments, and the resolution of the Sommerfeld paradox. Finally,
even when $n =1$, the amplitude of the oscillatory component of $U_{n}$ is
already very small $0.04 < A < 0.08$.
\end{enumerate}

In Theorem \ref{thm-catseye}, we prove the bifurcation to nontrivial
travelling solutions to 2D Euler equation, near the oscillatory shear
$U_{n}(y)$ in the energy norm. The streamlines of these travelling waves have
the structure of Kelvin's cat's eyes.
This study has been recently extended in \cite{lin-zeng-couette} to show that
(vorticity) $H^{\frac{3}{2}}$ is the critical regularity for nontrivial Euler
traveling waves to exist near Couette flow (see Remark \ref{rmk-TW-critical}).
In Theorem \ref{tm-robust}, we also show that 3D shears $\left(  U\left(
y,z\right)  ,0,0\right)  $ in a neighborhood ($W^{1,p}$ ($p>2$) in the
velocity variable) of any linearly unstable 2D shear (including our $U_{n}%
(y)$) are linearly unstable too.
This shows that the instability found near Couette flow in Theorem \ref{OST}
is also robust in the $3$D setting.

In recent years, there has been a renaissance in numerical dynamical system
studies on fluids. The focus was upon three classical flows: plane Couette
flow, plane Poiseuille flow, and pipe Poiseuille flow. Basic flows of them are
the linear shear for plane Couette flow, and the parabolic shear for both
plane and pipe Poiseuille flows. Original studies on transition to turbulence
for these flows were conducted by the Orszag group \cite{OK80} \cite{OP80}.
Primitive steak-roll-wave coherent structures were discovered. The Orszag
group also emphasized the importance of the slowly drifting states like our
$U_{n}(y)$. In 1990, Nagata made a breakthrough by discovering 3D steady
states (fixed points) in plane Couette flow \cite{Nag90}. There are two
branches of 3D steady states, called upper and lower branches. The same 3D
steady states also appeared in plane Poiseuille flow \cite{Wal03}. Most
recently the same 3D steady states were also discovered in pipe Poiseuille
flow and they become traveling waves with constant speeds in the laboratory
\cite{Eck08} \cite{Ker05}. The pipe discovery made laboratory experiment easy
to implement. Indeed, Hof et al. observed such traveling waves in experiments
\cite{Hof04}. A steady state (fixed point) study is generally the starting
point of a full dynamical system study which is still in progress \cite{KK01}
\cite{GHC08} \cite{HGCV09} \cite{Vis07} \cite{SYE06} \cite{SGLLE08}. The most
intriguing new discovery on the steady states is that the lower branch steady
states of the three classical flows share some universal feature, i.e. they
all approach 3D shears as the Reynolds number approaches infinity \cite{WGW07}
\cite{Vis08} \cite{Wal03}. Such 3D shears ($U(y,z), 0, 0$) are neutrally
stable under the linearized Euler dynamics. So they are not our 3D shears in
the neighborhood of our $U_{n}(y)$. On the other hand, any 3D shear
($U(y,z),0,0$) is a steady state of the 3D Euler equations; so it is
interesting to understand the special features of the limiting 3D shear of the
lower branch. A necessary condition on such a limiting 3D shear was also
discovered \cite{LV09}. In numerical and experimental studies on transition to
turbulence, one often observes the steak-roll-wave structure. The streak and
roll are the $0$-th Fourier mode, while the wave is the higher Fourier mode in
the Fourier series of the velocity field. When plotting these modes
separately, the steak-roll-wave structure should generically be observed.
Taking into account the linear instability of our 3D shears in a neighborhood
of our $U_{n}(y)$, such steak-roll-wave structures will be generated too.

The 3D steady state argument on the Sommerfeld paradox claims that the stable
manifolds of the 3D steady states may get very close (they cannot be
connected) to the linear shear in the infinite dimensional phase space, which
leads to the transition from the linear shear to turbulence \cite{GHC08}. The
main criticism of this argument is that these 3D steady states are not close
to the linear shear in the phase space even in the kinetic energy norm; while
experiments show that no matter how small the initial perturbation to the
linear shear is, a transition always occurs when the Reynolds number is large
enough. We believe that the linear instability of our $U_{n}(y)$ (and 3D
shears in its neighborhood) offers a better explanation for the initiation of
transition from the linear shear to turbulence. In other words, the key point
for answering the Sommerfeld paradox is whether or not there is a linear
instability happening arbitrarily close to the linear shear. Our $U_{n}(y)$
(and 3D shears in its neighborhood) does the job. Again, they approach the
linear shear in the velocity variable but not the vorticity variable, in
consistency with Romanov's nonlinear stability theorem \cite{Rom73}. On the
other hand, stable manifolds of the 3D steady states can often only be
established in higher Sobolev spaces in which Romanov's nonlinear stability
theorem prohibits them to get close to the linear shear.

Explorations on two dimensional viscous steady states turn out to be not
successful so far \cite{CE97} \cite{ENR08}. That is, the counterpart of the 3D
upper or lower branch steady state has not been found in 2D. A formal analysis
in \cite{Li10} confirms the non-existence of a viscous steady state in 2D. On
the other hand, numerics shows that transitions still occur from the linear
shear to turbulence in 2D. This further confirms that the stable manifolds of
the 3D steady states are not the initiators for the transition. Under the 2D
Euler dynamics, Theorem \ref{thm-catseye} shows the existence of 2D inviscid
steady states with cat's eye structures. The neighborhood of these eye
structures might be a good place for a future numerical search of 2D viscous
steady states.

A rigorous proof on the existence of 3D steady states in the three classical
flows remains open. The main difficulty is the lack of a proper bifurcation
point. There is no hope directly from the linear shear end since it is
linearly stable. From the infinite Reynolds number limiting shear of the lower
branch, it is promising and challenging since we do not know much of its
property except a necessary condition \cite{LV09}. However, with the discovery
of more and more 3D N-S steady states (travelling waves) numerically besides
the upper and lower branches \cite{HGC09}, it is unclear if any of these 3D
steady states are fundamentally important to the development of transient
turbulence. In Theorem \ref{thm-catseye}, we show that nontrivial 2D steady
states bifurcate from the unstable shears $\left(  U_{n}\left(  y\right)
,0\right)  $ under the 2D Euler dynamics. Since $U_{n}\left(  y\right)  $ is
arbitrarily close to the linear shear in the velocity variable, so we get
inviscid steady states near the linear shear. These inviscid steady states
could provide a natural starting point for constructing steady states of
Navier-Stokes equation with large Reynolds number. This is a problem of
bifurcation from infinity or asymptotic bifurcation theory as suggested by
Yudovich \cite{yudovich}. Of course, to get N-S travelling waves from Euler
travelling waves, the key and difficult issue is to understand the boundary layers.

Proving the existence of chaos (turbulence) for Couette flow is a far more
difficult problem than resolving the Sommerfeld paradox. The boundary layer
effect adds tremendously to the difficulty. The Kolmogorov flow (periodic
boundary condition in every spatial dimension) is a much easier mathematical
problem in this aspect. The boundary layer is not present. Some progress on
the dynamical system studies on the Kolmogorov flow has been made \cite{MS61}
\cite{CP97} \cite{CP05} \cite{Li05} \cite{SYE06} \cite{LL08}. So far, proving
the existence of chaos in partial differential equations is only successful
for simpler systems, for a survey, see \cite{Li04}.

The article is organized as follows: In section 2, we will discuss some
possible phase spaces in which dynamical system studies can be conducted on
the Couette flow. In section 3, we prove the inviscid linear instability of
our sequence of oscillatory shears $U_{n}(y)$. In section 4, we prove the
viscous linear instability of our sequence of oscillatory shears $U_{n}(y)$
when viewed frozen. In section 5, we prove a bifurcation of our oscillatory
shears $U_{n}(y)$ to Kelvin's cat's eyes under the 2D Euler dynamics. In
section 6, we prove the inviscid linear instability of 3D shears $U(y,z)$ in a
neighborhood of our oscillatory shears $U_{n}(y)$.

\section{Mathematical Formulation}

We are interested in fluid flows between two infinite horizontal planes
(Figure \ref{Cflow}) where the upper plane moves with unit velocity and the
lower plate is fixed. The dynamics of such a fluid flow is governed by the
Navier-Stokes (NS) equations
\begin{equation}
\vec{u}_{t}+\vec{u}\cdot\nabla\vec{u}=-\nabla p+\epsilon\Delta\vec{u}%
\ ,\quad\nabla\cdot\vec{u}=0\ ; \label{NS}%
\end{equation}
defined in the spatial domain $D_{\infty}=\mathbb{R}\times\lbrack
0,1]\times\mathbb{R}$, where $\vec{u}=\left(  u_{1},u_{2},u_{3}\right)  $ is
the velocity, $p$ is the pressure, and $\epsilon$ is the inverse of the
Reynolds number $\epsilon=1/R$. The following boundary condition identifies
the specific flow
\begin{align}
&  u_{1}(t,x,0,z)=0,\quad u_{1}(t,x,1,z)=1,\nonumber\\
&  u_{i}(t,x,0,z)=u_{i}(t,x,1,z)=0,\ (i=2,3). \label{nsbc}%
\end{align}
\begin{figure}[th]
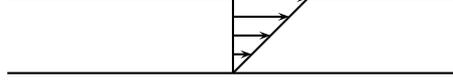

\vspace{0.5in} \centering
\[
\psline(-3,0)(3,0) \psline(-3,1)(3,1) \psline(0,0)(1,1) \psline(0,0)(0,1)
\psline{->}(0,0.5)(0.5,0.5) \psline{->}(0,1)(1,1)
\psline{->}(0,0.25)(0.25,0.25) \psline{->}(0,0.75)(0.75,0.75)
\]
\caption{Couette flow. }%
\label{Cflow}%
\end{figure}

\noindent The linear shear is given by
\begin{equation}
u_{1}=y,\quad u_{2}=u_{3}=0. \label{Cf}%
\end{equation}
One can choose the infinite dimensional phase space to be
\[
\hat{S}=\bigg \{u\ \bigg |\ u\in H_{\text{loc}}^{s}(D_{\infty}),\ (s\geq
3),\ \nabla\cdot\vec{u}=0,\text{ together with }(\ref{nsbc})\bigg \},
\]
where $H_{\text{loc}}^{s}$ is a local Sobolev space. This phase space $\hat
{S}$ is too large and too difficult to analyze. One can first study its
invariant subspace by posing extra periodic boundary condition along $x$ and
$z$ directions with periods $L_{1}$ and $L_{3}$. Denote by $D$ the partially
periodic domain $D=[0,L_{1}]\times\lbrack0,1]\times\lbrack0,L_{3}]$, the
partially periodic invariant subspace is given by
\begin{equation}
S=\bigg \{u\ \bigg |\ u\in H^{s}(D),\ (s\geq3),\ \nabla\cdot\vec{u}=0,\text{
together with }(\ref{nsbc})\bigg \}. \label{PhS}%
\end{equation}
Often we are interested in the two-dimensional reduction $u_{3}=\partial
_{3}=0$ in which case the phase space $S$ is simplified further. Moreover, in
the two-dimensional case, the flow is nicer since both 2D NS and 2D Euler
equations are globally well-posed.

\begin{remark}
By the change of variables
\[
u_{1}=y+v_{1},\ u_{2}=v_{2},\ u_{3}=v_{3},
\]
the new variable $\vec{v}=\left(  v_{1},v_{2},v_{3}\right)  $ satisfies the
Dirichlet boundary condition at $y=0,1$; thus
\[
\vec{v}=\sum_{n=1}^{+\infty}V_{n}(x,z)\sin n\pi y,
\]
and $V_{n}(x,z)$ is periodic in $x$ and $z$. By this representation, we see
that the phase space $S$ is a Banach manifold. In fact, the Navier-Stokes
equations (\ref{NS}) can be re-written in terms of the $v$ variable as
\begin{equation}
\vec{v}_{t}+\vec{v}\cdot\nabla\vec{v}=-\nabla p+\epsilon\Delta\vec{v}+\vec
{f}\ ,\quad\nabla\cdot\vec{v}=0\ ; \label{vNS}%
\end{equation}
where $\vec{f}=-y\partial_{x}\vec{v}-v_{2}\left(  1,0,0\right)  $. In terms of
$\vec{v}$, the phase spaces will be Banach spaces. \label{vRem}
\end{remark}

In our exploration inside the phase space, we will focus not only on the
neighborhood of the linear shear (\ref{Cf}), but also on the neighborhood of
the following sequence of oscillatory shears
\begin{equation}
u_{1}=U_{n}(y)=y+\frac{A}{n}\sin(4n\pi y),\ \left(  \frac{1}{2}\frac{1}{4\pi
}<A<\frac{1}{4\pi}\right)  ,\quad u_{2}=u_{3}=0, \label{Os}%
\end{equation}
which will be proved later to be linearly unstable under the 2D Euler flow.

All the 2D shears together form an invariant submanifold
\begin{equation}
\Lambda=\bigg \{u\in S\ \bigg |\ u_{1}=U(y),\ u_{2}=u_{3}%
=0,\ U(0)=0,\ U(1)=1\bigg \} \label{invS}%
\end{equation}
where $U(y)$ is an arbitrary function. Inside the invariant submanifold
$\Lambda$, the dynamics is governed by
\[
\partial_{t}U=\epsilon\partial_{y}^{2}U.
\]
The fixed point of this equation is given by
\[
\partial_{y}^{2}U=0,\quad\text{ i.e. }U=c_{1}y+c_{2}.
\]
The boundary conditions $U(0)=0$ and $U(1)=1$ imply that
\[
U=y
\]
which is the linear shear (\ref{Cf}). That is, when $\epsilon\neq0$, the
linear shear is the only fixed point inside $\Lambda$. Of course, when
$\epsilon=0$ (Euler flow), $\Lambda$ is an equilibrium manifold. Let
\[
U=y+V,
\]
then
\[
\partial_{t}V=\epsilon\partial_{y}^{2}V,\quad V(0)=V(1)=0.
\]
Thus,
\[
V=\sum_{n=1}^{+\infty}a_{n}e^{-\epsilon(n\pi)^{2}t}\sin n\pi y,
\]
where $a_{n}$'s are constants. Finally the orbits inside $\Lambda$ is given
by
\[
U=y+\sum_{n=1}^{+\infty}a_{n}e^{-\epsilon(n\pi)^{2}t}\sin n\pi y.
\]

\section{Inviscid Linear Instability of the Sequence of Oscillatory Shears}

Let ($U(y), 0$) be a steady shear of the 2D Navier-Stokes or Euler equation,
e.g. $U(y)=y$. Introducing the stream function $\psi$:
\[
u_{1}=\frac{\partial\psi}{\partial y},\quad u_{2}=-\frac{\partial\psi
}{\partial x},
\]
and linearizing at the steady shear in the form
\[
\psi=\phi(y)e^{i\alpha x+\lambda t}=\phi(y)e^{i\alpha(x-ct)},\quad
\lambda=-i\alpha c;
\]
one obtains the so-called Orr-Sommerfeld equation
\begin{equation}
\frac{\epsilon}{i\alpha}[\partial_{y}^{2}-\alpha^{2}]^{2}\phi+U^{\prime\prime
}\phi-(U-c)[\partial_{y}^{2}-\alpha^{2}]\phi=0, \label{Orr-S}%
\end{equation}
with the boundary conditions
\begin{equation}
\phi=\phi^{\prime}=0,\text{ at }y=0,\ 1. \label{bc-OS}%
\end{equation}
For Euler equation, $\epsilon=0$ and (\ref{Orr-S}) is reduced to the Rayleigh
equation
\begin{equation}
U^{\prime\prime}\phi-(U-c)[\partial_{y}^{2}-\alpha^{2}]\phi=0,
\label{Rayleigh}%
\end{equation}
with the boundary conditions
\begin{equation}
\phi=0,\text{ at }y=0,\ 1. \label{bc-Rayleigh}%
\end{equation}
We call $\left(  \alpha,c\right)  $ with $\alpha>0,c\in\mathbf{C\ }$an
eigenmode of Orr-Sommerfeld or Rayleigh equations, if for such $\left(
\alpha,c\right)  $ the equation (\ref{Orr-S}) or (\ref{Rayleigh}) is solvable
and the corresponding solution is called an eigenfunction. The eigenmode with
$\operatorname{Im}c>0$ is called unstable, with $\operatorname{Im}c=0$ is
called neutral. As shown in last section, all the shears ($U(y),0$) are steady
under the 2D Euler dynamics; while only the linear shear $U(y)=y$ is steady
under the 2D Navier-Stokes dynamics. Nevertheless, when the Reynolds number
$R$ is large ($\epsilon$ is small), the shears ($U(y),0$) only drift slowly.
When they are viewed frozen (or artificial forces make them steady), the
spectra of the corresponding Orr-Sommerfeld operator are still significant in
predicting their transient instabilities.

First indicated by Sommerfeld \cite{Som08}, the linear shear $U\left(
y\right)  =y\ $is linearly stable for all Reynolds number. This fact was
rigorously proved by Romanov \cite{Rom73} who showed that the eigenvalues of
the linearized Navier-Stokes Operator satisfies
\[
\operatorname{Re}\lambda<-C/R;
\]
and moreover, the linear shear is nonlinearly stable for all Reynolds number
in $W^{1,2}([0,1]\times\mathbb{R}^{2})$. When $\epsilon=0$, for the linear
shear, the Rayleigh equation reduces to
\[
(U-c ) \varphi= 0 , \text{ where } \varphi= [\partial_{y}^{2} - \alpha^{2}]
\phi,
\]
which has only continuous spectrum \cite{Fad71}
\[
c \in\left[  \min U(y) , \max U(y) \right]  = [0,1] ,
\]
and the corresponding quasi-eigenfunction for $c = U(y_{0})$ is given by
\[
\varphi= \delta(y- y_{0}).
\]
In the $\phi$ variable
\[
[\partial_{y}^{2} - \alpha^{2}] \phi= \delta(y- y_{0}) ,
\]
that is, $\phi$ is the Green function. In fact, the linear shear is also
nonlinearly stable under the 2D Euler flow. In the vorticity form, the 2D
Euler equation is given by
\[
\frac{\partial\Omega}{\partial t}+u\cdot\nabla\Omega=0,\quad\nabla\cdot u=0;
\]
which has the invariants $\int F(\Omega)dxdy$ for any $F$. Near the linear
shear, $\Omega=1+\omega$, and $\omega$ satisfies
\[
\frac{\partial\omega}{\partial t}+u\cdot\nabla\omega=0,\quad\nabla\cdot u=0;
\]
where $u$ is the velocity corresponding to $\Omega$. Then
\[
\int F(\omega)dxdy
\]
are also invariant for any $F$. Thus the linear shear is nonlinearly stable in
$L^{p}$ norm of vorticity for any $p\in\left[  1,+\infty\right]  $.

When $\epsilon=0$, each point in $\Lambda$ is a fixed point. The linear
spectrum of these shear flows has been studied a lot since Lord Rayleigh in
1880s \cite{ray}. If the profile of the shear does not contain any inflection
point, then by Rayleigh's criterion there is no unstable eigenvalue and the
spectrum consists of only continuous spectrum given by the imaginary axis.
However, the existence of an inflection point is only necessary for linear
instability and the results on sufficient conditions for instability remain
very limited \cite{drazin-reid} \cite{Lin03}. To construct unstable shears
near the linear shear, we use the following instability criterion for monotone
shear flows.

\begin{lemma}
\label{lemma-monotone-shear}Consider a monotone shear profile $U(y)\in
C^{2}\left(  0,1\right)  $ with inflection points$.$ Let $y^{0},\cdots,y^{l}$
be all the inflection points and $\left\{  U^{i}=U\left(  y^{i}\right)
\right\}  _{i=0}^{l}$ be the inflection values. Define%
\begin{equation}
Q_{i}(y)=\frac{U^{\prime\prime}(y)}{U(y)-U^{i}} \label{Po}%
\end{equation}
and the Sturm-Liouville operator
\begin{equation}
L_{i}\varphi=-\varphi^{\prime\prime}+Q_{i}(y)\varphi\label{SLO}%
\end{equation}
with the Dirichlet boundary condition $\varphi(0)=\varphi(1)=0$. (i) If for
some $0\leq i\leq l$, the operator $L_{i}$ has a negative eigenvalue, then the
Rayleigh equation (\ref{Rayleigh}) has unstable eigenmodes for some intervals
of wave numbers. Any end point $\alpha_{s}$ of the unstable intervals is such
that $-\alpha_{s}^{2}$ being a negative eigenvalue of some operator $L_{i}.$
(ii) If $L_{i}\geq0$ for all $0\leq i\leq l$, then the shear profile $U\left(
y\right)  $ is linearly stable for any wave number.
\end{lemma}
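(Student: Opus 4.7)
The plan is to read the Rayleigh equation \eqref{Rayleigh} as an eigenvalue equation for the Sturm--Liouville family
\[
L_c \phi := -\phi'' + \frac{U''(y)}{U(y)-c}\phi, \qquad \phi(0)=\phi(1)=0,
\]
namely $L_c\phi = -\alpha^2\phi$, so that $L_{U^i}=L_i$. For $c$ real and in the range $[\min U,\max U]$ the potential is genuinely singular at each root $y_c$ of $U(y)-c=0$ unless $U''(y_c)=0$, and monotonicity of $U$ makes such a root unique; so a neutral eigenmode can exist only at $c=U^i$ for some $i$, in which case the singularity is removable and the equation becomes exactly $L_i\phi=-\alpha^2\phi$. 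Thus every neutral mode at a positive wave number $\alpha_s$ forces $-\alpha_s^2$ to be a (negative) eigenvalue of some $L_i$, which is the second assertion of (i).

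For the first half of (i), suppose $L_{i_0}\varphi_s = -\alpha_s^2\varphi_s$ with $\alpha_s>0$. I would look for unstable eigenmodes of \eqref{Rayleigh} bifurcating from the neutral triple $(\alpha_s,U^{i_0},\varphi_s)$ by a Lyapunov--Schmidt reduction: write $\phi=\varphi_s+\tilde\phi$, $c=U^{i_0}+\sigma$, $\alpha^2=\alpha_s^2-\tau$, split into the $\varphi_s$ and $\varphi_s^\perp$ components, and invert the non-degenerate part on the complement. The resulting scalar dispersion relation has leading term
\[
\left\langle (L_c-L_{i_0})\varphi_s,\varphi_s\right\rangle + \tau\|\varphi_s\|^2 = 0,
\]
and as $\operatorname{Im}c\downarrow 0^+$ the Plemelj--Sokhotski identity applied at the inflection point $y^{i_0}$ produces an explicit imaginary contribution whose leading part in $\sigma$ is signed by $U'''(y^{i_0})$ and $\varphi_s(y^{i_0})$. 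Balancing real and imaginary parts against a real $\tau$ yields a smooth branch $c(\alpha)$ that lifts off the real axis into the upper half-plane as $\alpha$ varies from $\alpha_s$ on the appropriate side, producing an open unstable wave-number interval with $\alpha_s$ on its boundary. The converse statement that every endpoint of such an interval comes from an eigenvalue of some $L_i$ is then a continuation statement: the boundary of any unstable interval is a limit of unstable modes, Howard's semicircle bound $|c-\tfrac12|\leq\tfrac12$ keeps the limit in a compact set, and the limiting mode is neutral, hence of the form described above.

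For part (ii), the hypothesis $L_i\geq 0$ for every inflection value forbids $-\alpha^2<0$ from being an eigenvalue of any $L_i$ at positive $\alpha$, so by (i) no neutral mode exists at any positive wave number. A continuation argument then finishes the proof: homotope $U$ to the linear shear through a family $U^{(s)}$ of monotone profiles chosen so that the condition $L_i^{(s)}\geq 0$ persists (for example by removing inflection values one at a time, so that the relevant Sturm--Liouville operators stay nonnegative), and track any hypothetical unstable eigenvalue along the path. By Howard's semicircle the eigenvalues stay in a compact set; to leave the unstable region they would have to cross the imaginary $c$-axis, creating a neutral mode in violation of the spectral assumption. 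The main technical obstacle is the bifurcation analysis in (i): the map $c\mapsto L_c$ fails to be analytic across the real axis, so the Lyapunov--Schmidt step is genuinely singular rather than analytic, and the sign of the Plemelj residue is exactly what selects on which side of $\alpha_s$ the unstable interval opens; a secondary issue in (ii) is arranging a homotopy that preserves nonnegativity of every $L_i^{(s)}$ throughout the deformation, which is why the homotopy must be built with care rather than taken to be naive linear interpolation.
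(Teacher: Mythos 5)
The paper does not actually prove Lemma \ref{lemma-monotone-shear}: it quotes it from \cite{Lin05} (see also \cite{Lin03}), recalling only the guiding idea going back to Tollmien \cite{Tol35} that a negative eigenvalue $-\alpha_s^2$ of $L_i$ yields a neutral mode $(U^i,\alpha_s)$ of (\ref{Rayleigh}) from which unstable modes are continued via a formula for $\partial c/\partial\alpha$. Your sketch of the first half of (i) follows that same route, but the decisive technical content is missing in two places. First, your claim that a neutral mode can only occur at $c=U^i$ does not follow from the bare singularity count: when $c=U(y_c)$ with $U''(y_c)\neq0$, the Frobenius solution vanishing at the critical point is perfectly regular, so neutral modes with $\phi(y_c)=0$, and modes with $c$ equal to the endpoint values $U(0),U(1)$, are not excluded. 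What the lemma asserts about endpoints of unstable intervals concerns neutral \emph{limiting} modes, i.e. limits of unstable modes, and the heart of the proof in \cite{Lin03}, \cite{Lin05} (following \cite{Lin45}, \cite{Fad71}) is precisely the a priori estimate, uniform as $\operatorname{Im}c\downarrow0$, on unstable eigenfunctions near the critical layer which guarantees that the limit is a nontrivial solution with $c$ an inflection value and $-\alpha_s^2<0$ an eigenvalue of some $L_i$. Your closing step (``the limiting mode is neutral, hence of the form described above'') assumes exactly this compactness and non-degeneracy; Howard's semicircle controls only $c$, not the eigenfunctions, which could concentrate at the critical layer or degenerate in the limit. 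Likewise, in the Lyapunov--Schmidt/Plemelj step you never verify that the imaginary contribution produced by the Plemelj formula is nonzero and of the sign needed to push $c$ into the upper half plane; this nondegeneracy is where the monotonicity of $U$ and the structure of $Q_i$ are actually used, and without it the ``smooth branch lifting off the real axis'' is asserted rather than proved.

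For part (ii) your route is genuinely different from the one behind the lemma, and it contains the gap you yourself flag: you would need a deformation of $U$ to the linear shear through monotone profiles along which every relevant Sturm--Liouville operator remains nonnegative, while the set of inflection points (hence the family $L_i^{(s)}$ itself) changes along the way; no such construction is given, and ``removing inflection values one at a time'' does not obviously preserve the spectral condition, nor is it clear how hypothetical unstable eigenvalues are tracked when the number of operators jumps. The detour is also unnecessary: once the endpoint characterization in (i) is in hand, (ii) follows by continuation in the wave number $\alpha$ alone, with the profile fixed. The set of unstable wave numbers is open, there is an a priori upper bound on unstable $\alpha$ (so the unstable set, if nonempty, has a finite endpoint), and at such an endpoint the unstable modes converge to a neutral limiting mode, forcing some $L_i$ to have a negative eigenvalue, contradicting $L_i\geq0$. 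This is how \cite{Lin03}, \cite{Lin05} argue, and it avoids deforming $U$ altogether. As it stands, then, your proposal reproduces the correct skeleton for (i) but omits the critical-layer estimates and the sign computation that constitute the actual proof, and its argument for (ii) rests on an unconstructed homotopy.
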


Lemma \ref{lemma-monotone-shear} was rigorously proved in \cite{Lin05}. The
following key observation is due to Tollmien \cite{Tol35} in 1930 (see also
\cite{Fad71} and \cite{Lin45}): if $-\alpha_{s}^{2}$ is a negative eigenvalue
of $L_{i}$ and $\phi_{s}$ is the eigenfunction, then $\left(  c,\alpha\right)
=\left(  U^{i},\alpha_{s}\right)  $ is a neutral eigenmode to the Rayleigh
equation (\ref{Rayleigh}), with $\phi_{s}$ being the eigenfunction; then one
can try to find unstable modes $\left(  c,\alpha,\phi\right)  $ $\left(
\operatorname{Im}c>0\right)  \ $to (\ref{Rayleigh}) near such neutral mode. A
variational formula for $\partial c/ \partial\alpha$ can be derived at the
neutral mode.

Notice that any point in $\Lambda$ has the representation
\begin{equation}
U\left(  y\right)  =y+\sum_{n=1}^{+\infty}a_{n}\sin n\pi y, \label{foexp}%
\end{equation}
so the sequence of oscillatory shears (\ref{formula-U_n}) can be viewed as
single modes of the above expansion. The following theorem shows the inviscid
linear instability of the oscillatory shears (\ref{formula-U_n}). Since
$\left\vert U_{n}\left(  y\right)  -y\right\vert _{L^{\infty}\left(
0,1\right)  }\leq\frac{A}{n}$, this shows that there exist unstable shears in
any small $L^{\infty}$ (velocity) neighborhood of the linear shear (\ref{Cf}).
In particular, these unstable shears are arbitrarily small kinetic energy
perturbations to the linear shear.

\begin{theorem}
Under the 2D Euler dynamics, the oscillatory shears $U_{n}\ $defined by
(\ref{formula-U_n}) are linearly unstable. More precisely, there exists an
unstable eigenmode curve $\left(  \alpha,c(\alpha)\right)  $ with
$\operatorname{Im}c(\alpha)>0$ of the Rayleigh equation (\ref{Rayleigh}) with
$U_{n}$, stemming from a neutral mode ($\alpha_{n},1/2$) where $\alpha_{n}\geq
c_{0}n$ ($c_{0}>0$ is independent of $n$). The corresponding unstable
eigenfunctions are in $C^{\infty}\left(  0,1\right)  $. \label{OST}
\end{theorem}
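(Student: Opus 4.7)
The plan is to apply Lemma~\ref{lemma-monotone-shear} at the inflection value $U^{\ast}=U_{n}(1/2)=1/2$ and to verify its hypothesis by a scaling argument that produces a negative eigenvalue of order $-n^{2}$ for the associated Sturm--Liouville operator. First I would check the preliminaries: $4A\pi<1$ forces $U_{n}^{\prime}(y)=1+4A\pi\cos(4n\pi y)>0$, so $U_{n}$ is strictly monotone; the inflection points are $y=k/(4n)$, $k=0,\dots,4n$; and at $y=1/2$ (the choice $k=2n$) both $U_{n}-1/2$ and $U_{n}^{\prime\prime}$ have simple zeros. Strict monotonicity makes $y=1/2$ the unique zero of $U_{n}-1/2$ in $[0,1]$, so
\[
Q_{\ast}(y):=\frac{U_{n}^{\prime\prime}(y)}{U_{n}(y)-1/2}=\frac{-16An\pi^{2}\sin(4n\pi(y-1/2))}{(y-1/2)+(A/n)\sin(4n\pi(y-1/2))}
\]
is smooth on $[0,1]$, with $Q_{\ast}(1/2)=-\tfrac{64An^{2}\pi^{3}}{1+4A\pi}$.

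Next I would change variables $\xi=4n\pi(y-1/2)$, which rescales $L_{\ast}=-\partial_{y}^{2}+Q_{\ast}$ into $(4n\pi)^{2}\tilde{L}$ with
\[
\tilde{L}=-\partial_{\xi}^{2}+\tilde{Q}(\xi),\qquad \tilde{Q}(\xi)=\frac{-4A\pi\sin\xi}{\xi+4A\pi\sin\xi},
\]
acting on $H_{0}^{1}(-2n\pi,2n\pi)$. The potential $\tilde{Q}$ is $n$-independent, even, smooth, attains its trough $\tilde{Q}(0)=-\tfrac{4A\pi}{1+4A\pi}$, is non-positive on $(-\pi,\pi)$ (because $\sin\xi$ and $\xi+4A\pi\sin\xi$ share sign there), has $|\tilde{Q}|$ monotone on $(0,\pi)$ (from $(\sin\xi-\xi\cos\xi)^{\prime}=\xi\sin\xi>0$), and decays like $1/\xi$ at infinity. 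I would then test with $\tilde{\varphi}(\xi)=\cos(\xi/2)$ on $(-\pi,\pi)$, extended by zero, which lies in $H_{0}^{1}(-2n\pi,2n\pi)$ for every $n\ge 1$. The kinetic term is $\int\tilde{\varphi}^{\prime 2}\,d\xi=\pi/4$; using the identity $\sin\xi(1+\cos\xi)=4\sin(\xi/2)\cos^{3}(\xi/2)$ together with $\sin\xi/\xi\ge 2/\pi$ on $(0,\pi/2)$, I would verify the key estimate
\[
\int_{-\pi}^{\pi}\tilde{Q}(\xi)\cos^{2}(\xi/2)\,d\xi<-\frac{\pi}{4}
\]
uniformly for $A\in(1/(8\pi),1/(4\pi))$. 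This forces the ground state of $\tilde{L}$ to satisfy $\tilde{\mu}_{n}\le -\tilde{c}<0$ uniformly in $n\ge 1$, so $L_{\ast}$ has smallest eigenvalue $-\alpha_{n}^{2}=(4n\pi)^{2}\tilde{\mu}_{n}$ with $\alpha_{n}\ge c_{0}n$ and $c_{0}=4\pi\sqrt{\tilde{c}}$ independent of $n$.

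By Tollmien's observation recalled after Lemma~\ref{lemma-monotone-shear}, the pair $(\alpha_{n},1/2)$ is then a neutral eigenmode of the Rayleigh equation~(\ref{Rayleigh}) with profile $U_{n}$, whose eigenfunction is the $L_{\ast}$-eigenfunction. The standard bifurcation argument for monotone shears developed in~\cite{Lin03,Lin05}---based on the variational formula for $\partial c/\partial\alpha$ at the neutral mode---then produces an analytic curve $(\alpha,c(\alpha))$ of eigenmodes of~(\ref{Rayleigh}) with $\operatorname{Im}c(\alpha)>0$ emanating from $(\alpha_{n},1/2)$. Since $U_{n}-c(\alpha)$ never vanishes when $\operatorname{Im}c(\alpha)>0$, the Rayleigh equation reduces to a nonsingular linear ODE with $C^{\infty}$ coefficients and $\phi\in C^{\infty}(0,1)$ follows by ODE bootstrapping. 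I expect the delicate part to be the quadratic-form estimate above: the trough depth of $\tilde{Q}$ is at most $1/2$, only narrowly beating the Dirichlet kinetic cost $1/4$ of $-\partial_{\xi}^{2}$ on $(-\pi,\pi)$ near the lower bound $A=1/(8\pi)$, so the pointwise estimates must be sharp; a fallback is to pass to the $L^{2}(\mathbb{R})$-limit, where $-\partial_{\xi}^{2}+\tilde{Q}$ has a negative ground state because its $(-\pi,\pi)$-contribution dominates the tail of $\int_{\mathbb{R}}\tilde{Q}\,d\xi$, and then recover the uniform-in-$n$ negativity of $\tilde{\mu}_{n}$ by monotone convergence of Dirichlet ground states as $(-2n\pi,2n\pi)\uparrow\mathbb{R}$.
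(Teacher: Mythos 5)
Your overall strategy is exactly the paper's: verify, via Lemma \ref{lemma-monotone-shear} applied at the inflection point $y=1/2$, that the Sturm--Liouville operator with potential $Q=U_n''/(U_n-\tfrac12)$ has a negative eigenvalue of size $O(n^2)$ by exhibiting a test function with negative Rayleigh quotient, then pass through the Tollmien neutral mode $(\alpha_n,1/2)$ to an unstable curve, and get $C^\infty$ regularity because $U_n-c$ does not vanish when $\operatorname{Im}c>0$. Your rescaling $\xi=4n\pi(y-\tfrac12)$, which makes the potential $\tilde{Q}(\xi)=-4A\pi\sin\xi/(\xi+4A\pi\sin\xi)$ independent of $n$, together with the fixed test function $\cos(\xi/2)$ supported in $(-\pi,\pi)\subset(-2n\pi,2n\pi)$, is a clean variant of the paper's piecewise-linear ``plateau'' test function supported in $|y-\tfrac12|\le 3/(8n)$, and would give $\alpha_n\ge c_0 n$ uniformly in $n$ just as in the paper.

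The genuine gap is at the pivotal quantitative step. Writing $a=4A\pi\in(\tfrac12,1)$, your key inequality is equivalent to $J(a):=\int_0^\pi a\sin\xi\,(1+\cos\xi)/(\xi+a\sin\xi)\,d\xi>\pi/4$; since the integrand is increasing in $a$, the worst case is $a\to\tfrac12$, where $J\approx 0.90$ versus $\pi/4\approx 0.785$ --- so the inequality is true, but with only about a $14\%$ margin, and the tools you name do not reach it. Jordan's bound $\sin\xi\ge(2/\pi)\xi$ gives $a\sin\xi/(\xi+a\sin\xi)\ge 2a/(\pi+2a)\approx 0.24$ on $(0,\pi/2)$, while the true value near $\xi=0$ (where the weight $1+\cos\xi$ is largest) is $a/(1+a)=1/3$; combining this with the trivial bound $\xi+a\sin\xi\le\pi$ on $(\pi/2,\pi)$ yields only $J(1/2)\ge 0.70$, which does not beat $\pi/4$. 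This is precisely where the paper's choice of test function matters: the plateau puts full (untapered) weight where the crude Jordan-type bound on $\tilde{Q}$ holds, and the shoulder contribution is shown to be $\le 0$ by a symmetry/monotonicity argument, so the elementary estimate closes; with $\cos(\xi/2)$, which decays exactly where $\tilde{Q}$ is deepest, you need genuinely sharper pointwise bounds (e.g.\ $\sin\xi\ge\xi-\xi^3/6$ does suffice) or you should switch to a plateau-type test function. Your fallback cannot repair this for the theorem as stated: Dirichlet ground-state eigenvalues decrease as the interval $(-2n\pi,2n\pi)$ grows, so negativity of the whole-line ground state transfers only to large $n$, not to every $n\ge1$ (in particular not to $n=1$), whereas the theorem asserts instability of every $U_n$. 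The remaining steps (neutral mode, the unstable curve via Lemma \ref{lemma-monotone-shear} and \cite{Lin05}, and regularity) coincide with the paper's.
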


\begin{proof}
First notice that the oscillatory shears (\ref{formula-U_n}) are monotone,
since $U_{n}^{\prime}>0$. So to show the linear instability of $U_{n}$, by
Lemma \ref{lemma-monotone-shear} it suffices to prove that for at least one
inflection point, the Sturm-Liouville operator (\ref{SLO}) has a negative
eigenvalue. We choose the inflection point of $U_{n}$ at $y=1/2$. Define
\begin{equation}
Q(y)=\frac{U^{\prime\prime}(y)}{U(y)-U(\frac{1}{2})}=\frac{-16\pi^{2}%
nA\sin(4n\pi y)}{y-\frac{1}{2}+\frac{A}{n}\sin(4n\pi y)}\ \label{ExQ}%
\end{equation}
and the Sturm-Liouville operator $L=-\frac{d^{2}}{dy^{2}}+Q(y)$. By
Rayleigh-Riesz principle, the smallest eigenvalue $\lambda_{1}\ $of $L$ is
given by
\begin{equation}
\lambda_{1}=\min_{\varphi\in H_{0}^{1}\left(  0,1\right)  }\frac{\int_{0}%
^{1}\left[  \left(  \varphi^{\prime}\right)  ^{2}+Q\varphi^{2}\right]
dy}{\int_{0}^{1}\varphi^{2}dy}\ . \label{riesz-Rayleigh}%
\end{equation}
Thus, to prove that $\lambda_{1}$ is negative, all we need to do is to show
that the fraction on the right hand side of above is negative for some
specific test function $\varphi\in H_{0}^{1}\left(  0,1\right)  $. Let
$y^{\prime}=y-\frac{1}{2}$, $y^{\prime}\in\left[  -\frac{1}{2},\frac{1}%
{2}\right]  \ $and denote $\tilde{Q}(y^{\prime})=Q(y^{\prime}+\frac{1}{2})$.
When $y^{\prime}\in\lbrack-\frac{1}{8n},\frac{1}{8n}],$ we have
\[
\frac{2}{\pi}|4n\pi y^{\prime}|\leq|\sin(4n\pi y^{\prime})|\leq|4n\pi
y^{\prime}|,
\]
thus
\[
\tilde{Q}(y^{\prime})=\frac{-16\pi^{2}nA\left\vert \sin(4n\pi y^{\prime
})\right\vert }{\left\vert y^{\prime}\right\vert +\frac{A}{n}\left\vert
\sin(4n\pi y^{\prime})\right\vert }\leq\frac{-16\pi^{2}nA\frac{2}{\pi
}\left\vert 4n\pi y^{\prime}\right\vert }{\left\vert y^{\prime}\right\vert
+\frac{A}{n}\left\vert 4n\pi y^{\prime}\right\vert }=\frac{-2(8\pi)^{2}n^{2}%
A}{1+4\pi A}\ .
\]
We choose the test function $\varphi$ as follows (Figure \ref{tf}):
\[
\varphi(y^{\prime})=\left\{
\begin{array}
[c]{l}%
\frac{1}{4n},\quad-\frac{1}{8n}\leq y^{\prime}\leq\frac{1}{8n};\cr\frac{3}%
{8n}-y^{\prime},\quad\frac{1}{8n}<y^{\prime}\leq\frac{3}{8n};\cr\frac{3}%
{8n}+y^{\prime},\quad-\frac{3}{8n}\leq y^{\prime}<-\frac{1}{8n};\cr0,\quad
\frac{1}{2}\geq|y^{\prime}|>\frac{3}{8n}.\cr
\end{array}
\right.
\]
\begin{figure}[th]
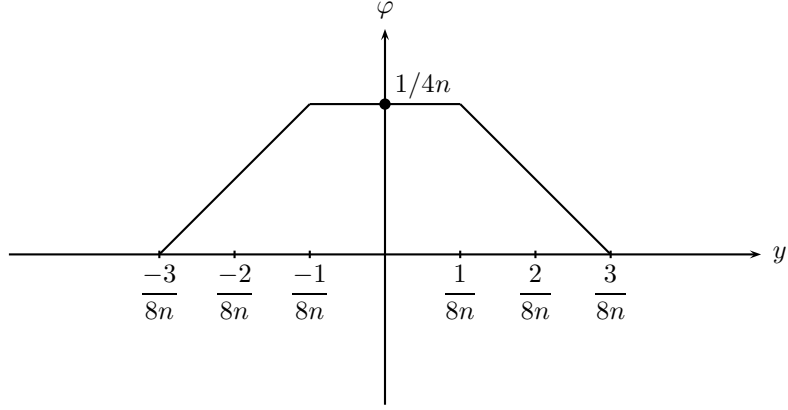

\centering
\vspace{1.0in}
\[
\psline{->}(-5,0)(5,0) \rput(5.25,0){y} \psline{->}(0,-2)(0,3)
\rput(0,3.25){\varphi} \psline(1,-0.05)(1,0.05) \psline(2,-0.05)(2,0.05)
\psline(3,-0.05)(3,0.05) \psline(-1,-0.05)(-1,0.05) \psline(-2,-0.05)(-2,0.05)
\psline(-3,-0.05)(-3,0.05) \rput(1,-0.5){\frac{1}{8n}} \rput(2,-0.5){\frac
{2}{8n}} \rput(3,-0.5){\frac{3}{8n}} \rput(-1,-0.5){\frac{-1}{8n}}
\rput(-2,-0.5){\frac{-2}{8n}} \rput(-3,-0.5){\frac{-3}{8n}} \psline(-1,2)(1,2)
\rput(0.5,2.25){1/4n} \pscircle[fillcolor=black,fillstyle=solid](0,2){.075}
\psline(1,2)(3,0) \psline(-1,2)(-3,0)
\]
\vspace{0.5in}\caption{The test function $\varphi$.}%
\label{tf}%
\end{figure}Since both $\tilde{Q}(y^{\prime})$ and $\varphi(y^{\prime})$ are
even in $y^{\prime}$, we only need to estimate over the interval $\left[
0,\frac{3}{8n}\right]  $. First we have
\[
\int_{0}^{\frac{3}{8n}}\left(  \varphi^{\prime}\right)  ^{2}dy^{\prime}%
=\frac{1}{4n}.
\]
Notice that for $\eta\in\lbrack0,\frac{1}{8n}]$,
\[
U_{n}(\frac{1}{4n}+\eta+\frac{1}{2})-U_{n}(\frac{1}{2})>U_{n}(\frac{1}%
{4n}-\eta+\frac{1}{2})-U_{n}(\frac{1}{2})>0,
\]
since $U_{n}$ is monotonically increasing, and
\[
-\sin\left[  4n\pi\left(  \frac{1}{4n}+\eta\right)  \right]  =\sin\left[
4n\pi\left(  \frac{1}{4n}-\eta\right)  \right]  .
\]
Thus
\[
-\tilde{Q}(\frac{1}{4n}-\eta)>\tilde{Q}(y^{\prime}=\frac{1}{4n}+\eta)\geq0.
\]
Notice also that
\[
\varphi(\frac{1}{4n}-\eta)>\varphi(\frac{1}{4n}+\eta)\geq0,
\]
so
\[
\int_{\frac{1}{8n}}^{\frac{3}{8n}}\tilde{Q}\varphi^{2}dy^{\prime}=\int
_{0}^{\frac{1}{8n}}\left[  \tilde{Q}(\frac{1}{4n}-\eta)\varphi^{2}(\frac
{1}{4n}-\eta)+\tilde{Q}(\frac{1}{4n}+\eta)\varphi^{2}(\frac{1}{4n}%
+\eta)\right]  d\eta\leq0.
\]
On the other hand,
\begin{align*}
-\int_{0}^{\frac{1}{8n}}\tilde{Q}\varphi^{2}dy^{\prime}  &  \geq\frac
{8A\pi^{2}(4n)^{2}}{1+4A\pi}\left(  \frac{1}{4n}\right)  ^{2}\frac{1}{8n}\\
&  =\frac{4A\pi^{2}}{1+4A\pi}\frac{1}{4n}=\frac{\pi(1-\delta)}{2-\delta}%
\frac{1}{4n},
\end{align*}
where $4A\pi=1-\delta$, $0<\delta<1/2$ for the oscillatory shears
(\ref{formula-U_n}). We also have
\[
\int_{0}^{\frac{3}{8n}}\varphi^{2}dy^{\prime}=\left(  \frac{1}{4n}\right)
^{2}\frac{1}{8n}+\frac{1}{3}\left(  \frac{1}{4n}\right)  ^{3}=\frac{5}%
{6}\left(  \frac{1}{4n}\right)  ^{3}.
\]
Gathering all the above estimates together, we get
\[
\frac{\int_{0}^{1}\left[  \left(  \varphi^{\prime}\right)  ^{2}+Q\varphi
^{2}\right]  dy}{\int_{0}^{1}\varphi^{2}dy}=\frac{\int_{0}^{\frac{3}{8n}%
}\left[  \left(  \varphi^{\prime}\right)  ^{2}+\tilde{Q}\varphi^{2}\right]
dy^{\prime}}{\int_{0}^{\frac{3}{8n}}\varphi^{2}dy^{\prime}}\leq-\frac{6}%
{5}(4n)^{2}\left[  \frac{\pi(1-\delta)}{2-\delta}-1\right]  .
\]
By (\ref{riesz-Rayleigh}), this means
\begin{equation}
\lambda_{1}\leq-\frac{6}{5}(4n)^{2}\left[  \frac{\pi(1-\delta)}{2-\delta
}-1\right]  <0 \label{lambda-1}%
\end{equation}
when $0<\delta<\frac{\pi-2}{\pi-1}\approx0.533$. Since $0<\delta<1/2$ for the
oscillatory shears $U_{n}\left(  y\right)  \ $(\ref{Os}), the corresponding
$\lambda_{1}$ is negative. Thus by Lemma \ref{lemma-monotone-shear}, we get
unstable modes $\left(  \alpha,c\left(  \alpha\right)  \right)  $ $\left(
\operatorname{Im}c\left(  \alpha\right)  >0\right)  $ to the Rayleigh equation
(\ref{Rayleigh}) associated with $U_{n}\left(  y\right)  ,$ for certain
intervals of wave number $\alpha$. In particular, the unstable wave number
intervals include a neighborhood of $\alpha_{n}=\sqrt{-\lambda_{1}}\geq
c_{0}n$ and for those wave numbers $\alpha$, the unstable eigenvalue $c\left(
\alpha\right)  $ is close to the inflection value $\frac{1}{2}$. Here, $c_{0}$
is a positive constant independent of $n$ (see (\ref{lambda-1})). Since
$U_{n}\left(  y\right)  \in C^{\infty}\left(  0,1\right)  $, the regularity of
the unstable eigenfunction follows easily from the Rayleigh equation
(\ref{Rayleigh}). This completes the proof of the theorem.
\end{proof}

\begin{remark}
\label{remark-inviscid-insta}1). Even though they approach the Couette flow
(\ref{Cf}) in the energy norm and $L^{\infty}$ norm of velocity as
$n\rightarrow\infty$, the oscillatory shears (\ref{Os}) do not approach the
linear shear in $H^{s}$ ($s\geq1$) as $n\rightarrow\infty$; by simply noticing
that
\[
U_{n}^{\prime}=1+4\pi A\cos(4n\pi y).
\]
Thus in our phase space $S$, the oscillatory shears are of a finite distance
away from the linear shear for large $n$. On the other hand, in fluid
experiments, it is the $L^{\infty}$ norm of velocity that is observed; thus
for large $n$, the oscillatory shears (\ref{Os}) can be considered to be small
noises by fluid experimentalists.

2). The form of the unstable shears $U_{n}\left(  y\right)  $
(\ref{formula-U_n}) is not unique. For example, we could choose
\begin{equation}
U_{n}\left(  y\right)  = y +\frac{A}{n}W\left(  ny\right)
\label{formula-general-U_n}%
\end{equation}
where $W\left(  y\right)  \in C^{2}\left(  0,1\right)  $ is a \thinspace
$1-$periodic function of the similar shape as $\sin y$. By choosing constant
$A$ properly, the proof of Theorem \ref{OST} can still go through to get the
linear instability of $U_{n}\left(  y\right)  $. Nevertheless, we consider the
oscillatory shears (\ref{Os}) as good representatives of linearly unstable
shears near the linear shear since they are single Fourier modes of
(\ref{foexp}).

We also notice that the oscillatory structure of $U_{n}$ (\ref{formula-U_n})
is in some sense necessary for instability. First, for any given function
$f\left(  y\right)  \in C^{2}\left(  0,1\right)  $, the shear flow
$U_{\varepsilon}\left(  y\right)  =y+\varepsilon f\left(  y\right)  $ is
spectrally stable, i.e., there is no unstable solution to the Rayleigh
equation (\ref{Rayleigh}), when $\varepsilon$ is small enough. This can be
seen as follows. When $\varepsilon$ is small, $U_{\varepsilon}\left(
y\right)  $ is monotone. So by Lemma \ref{lemma-monotone-shear} (ii),
$U_{\varepsilon}\left(  y\right)  $ is spectrally stable if all the operators
$L_{i}=-\frac{d^{2}}{dy^{2}}+Q_{i}(y)$ are nonnegative, where%
\[
Q_{i}(y)=\varepsilon\frac{f^{\prime\prime}\left(  y\right)  }{U_{\varepsilon
}\left(  y\right)  -U_{\varepsilon}\left(  y_{i}\right)  }%
\]
and $y_{i},i=1,\cdots l,$ are all the inflection points of $f\left(  y\right)
$. When $\varepsilon$ is small, $\left\vert Q_{i}(y)\right\vert _{L^{\infty}}$
is small so $\mathcal{L}_{i}\geq0$ which proves the linear stability of
$U_{\varepsilon}\left(  y\right)  .$ Second, for shears of the form
(\ref{formula-general-U_n}) with $W\left(  y\right)  \in C^{2}\left(
\mathbf{R}\right)  $, to preserve the Couette boundary conditions (\ref{nsbc})
for all large $n$, the function $W\left(  y\right)  $ must have infinitely
many zeros. So it is natural to choose $W$ as a periodic function with zeros.

3). For the Sturm-Liouville operator $L=-\frac{d^{2}}{dy^{2}}+Q(y)$ with
$Q(y)$ defined by (\ref{ExQ}), the second eigenvalue $\lambda_{2}\geq0$ (see
Appendix for a proof). Thus $\lambda_{1}$ is the only negative eigenvalue of
$L$, so by Sturm-Liouville theory $\lambda_{1}$ is simple and the
corresponding eigenfunction $\phi_{n}\left(  y\right)  $ can be chosen such
that $\phi_{n}\left(  y\right)  >0$ when $y\in\left(  0,1\right)  $. This fact
will be used later in the proof of Theorem \ref{thm-catseye}.
\end{remark}

Let $D=[0,L_{1}]\times\lbrack0,1]$, where $L_{1}=\frac{2\pi}{\alpha}$ and
$\alpha$ is an unstable wave number for $U_{n}\left(  y\right)  $ with $n$
large. As a corollary of Theorem \ref{OST}, we have

\begin{theorem}
The spectra of the linear Euler operator at the oscillatory shear (\ref{Os})
in $H^{s}(D)$ ($s \geq3$) are as follows:

\begin{enumerate}
\item There are $J \geq1$ unstable eigenvalues and $J \geq1$ stable eigenvalues.

\item The imaginary axis is the absolutely continuous spectrum.
\end{enumerate}
\end{theorem}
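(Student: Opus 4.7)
The plan is to reduce the spectral analysis of the linearized Euler operator $L$ on $H^{s}(D)$ to a family of Rayleigh-type problems, one for each Fourier mode $e^{ik\alpha x}$ with $k\in\mathbb{Z}$, since $L$ commutes with $x$-translation on the periodic strip $D=[0,L_{1}]\times[0,1]$. On each nonzero Fourier mode $k$, the spectral problem for $L$ is equivalent to the Rayleigh equation \eqref{Rayleigh} at $U_{n}$ with wave number $k\alpha$ and spectral parameter $c=i\lambda/(k\alpha)$, so that the spectrum of $L$ is the union over $k\in\mathbb{Z}\setminus\{0\}$ of the images of the Rayleigh spectra under $c\mapsto -ik\alpha c$, together with the kernel on the $k=0$ mode.

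For the first assertion, Theorem \ref{OST} directly supplies an unstable Rayleigh eigenmode $(\alpha,c)$ with $\operatorname{Im}c>0$ at $U_{n}$ for large $n$. By the choice $L_{1}=2\pi/\alpha$, this wave number is realized on the Fourier mode $k=1$, and $\lambda=-i\alpha c$ is an unstable eigenvalue of $L$ with positive real part $\alpha\operatorname{Im}c$; the eigenfunction $\phi(y)e^{i\alpha x}$ lies in $H^{s}(D)$ since $\phi\in C^{\infty}(0,1)$. For the corresponding stable eigenvalue I would invoke the reflection symmetry $(x,t)\mapsto(-x,-t)$, under which the linearization at any shear is invariant; this sends $\lambda\mapsto-\lambda$, converting the unstable eigenvalue into a stable one. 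Equivalently, pairing the Rayleigh solution $(\alpha,\bar c)$ with the Fourier mode $k=-1$ yields the eigenvalue $i\alpha\bar c$ of $L$, again with negative real part.

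For the second assertion, I would first establish containment of the imaginary axis in the spectrum and then argue absolute continuity. For containment, on each Fourier mode $k\neq 0$ the Rayleigh equation admits generalized solutions $\omega=\delta(y-y_{0})$ for every $c=U_{n}(y_{0})\in[0,1]$; regularizing these deltas by smooth bumps of shrinking support produces Weyl sequences in $H^{s}(D)$ witnessing $\lambda=-ik\alpha U_{n}(y_{0})$ as spectrum of $L$. As $y_{0}$ varies over $(0,1)$ and $k$ over $\mathbb{Z}\setminus\{0\}$, this set exhausts the entire imaginary axis. For absolute continuity, the inequality $4\pi A<1$ gives $U_{n}'(y)\geq 1-4\pi A>0$, so $y\mapsto U_{n}(y)$ is a smooth diffeomorphism of $[0,1]$; changing variables conjugates the Rayleigh operator on each mode to multiplication by the spectral parameter $c$ plus an integral perturbation with smooth kernel arising from the $U_{n}''\phi$ term. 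The multiplication operator has Lebesgue absolutely continuous spectrum on $[0,1]$, and Weyl's theorem preserves the essential spectrum under relatively compact perturbations. A limiting absorption argument in the spirit of Faddeev \cite{Fad71} then rules out singular continuous spectrum and embedded eigenvalues.

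The main obstacle is this last step, especially near the inflection values of $U_{n}$ such as $c=1/2$, where the Rayleigh kernel $U_{n}''(y)/(U_{n}(y)-c)$ becomes singular and embedded eigenvalues could a priori arise on the imaginary axis. Excluding them requires a careful limiting absorption principle for the Rayleigh operator in a neighborhood of each inflection value, together with a Sturm--Liouville argument of the type used in Remark \ref{remark-inviscid-insta} to control the associated auxiliary operator. The other ingredients — unstable eigenvalues from Theorem \ref{OST}, stable eigenvalues from reflection symmetry, and containment of the imaginary axis via Weyl sequences — follow more routinely from standard tools.
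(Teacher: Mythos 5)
Your route is essentially the paper's: the authors prove claim (1) by pointing to Theorem \ref{OST} (with the stable partner coming from the standard symmetry of the spectrum of the linearized Euler operator at a shear), and for claim (2) they simply cite Faddeev \cite{Fad71}, which is exactly the limiting-absorption analysis you defer to at the end, so your flagging of that step is not a gap relative to the paper. One small slip in your ``equivalently'' clause: under your own convention $c\mapsto-ik\alpha c$, pairing the Rayleigh eigenvalue $\bar c$ with the mode $k=-1$ gives $i\alpha\bar c=\overline{-i\alpha c}$, whose real part is $+\alpha\operatorname{Im}c>0$, i.e.\ again an unstable eigenvalue; the stable one arises from pairing $k=-1$ with $c$ itself (giving $i\alpha c$, real part $-\alpha\operatorname{Im}c<0$), or equivalently $k=+1$ with $\bar c$. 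Your primary argument via the symmetry $(x,t)\mapsto(-x,-t)$, which sends $\lambda\mapsto-\lambda$, is correct and already suffices for the stable eigenvalues.
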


The first claim is proved in Theorem \ref{OST}, and the proof of the second
claim can be found in \cite{Fad71}.

For any linearly unstable shear flow of Euler equation, the nonlinear
instability can be established too (i.e. \cite{bgs}, \cite{gre00},
\cite{Lin04}). Moreover, the existence of unstable and stable manifolds near
unstable shear flows was recently proved \cite{lin-zeng} for Euler equation.

\section{Viscous Linear Instability of the Sequence of Oscillatory Shears}

A more precise statement of the Sommerfeld paradox is as follows:

\begin{itemize}
\item Mathematically, the linear shear is linearly and nonlinear stable for
all Reynolds number $R$, in fact, all the eigenvalues of the Orr-Sommerfeld
operator satisfy the bound $\lambda< -C / R$ where $C$ is a positive constant
\cite{Rom73}.

\item Experimentally, for any $R>360$ (where $R=\frac{1}{4\epsilon}$ in our
setting \cite{BTAA95}), there exists a threshold amplitude of perturbations,
of order $\O (R^{-\mu})$ where $1\leq\mu<\frac{21}{4}$ depends on the type of
the perturbations \cite{KLH94}, which leads to transition to turbulence.
\end{itemize}

A mathematically more precise re-statement of this experimental claim is as
follows: For any fixed amplitude of perturbations to the linear shear, when
$R$ is sufficiently large, transition to turbulence occurs. For any fixed $R$,
when the amplitude of perturbations is sufficiently large, transition to
turbulence occurs. There may even be an asymptotic relation between such
amplitude threshold and $R$.

Our main idea of the resolution is as follows: The oscillatory shears
(\ref{Os}) are perturbations of the linear shear. As $n\rightarrow\infty$,
they approach the linear shear in $L^{\infty}$ norm of velocity. They are
linearly unstable under the 2D Euler dynamics. As shown later on, this will
lead to the existence of an unstable solution of the Orr-Sommerfeld equation
(\ref{Orr-S}) with $U$ given by the oscillatory shears (\ref{Os}), when the
Reynolds number $R$ is sufficiently large. Notice that these oscillatory
shears are not fixed points anymore under the Navier-Stokes dynamics.
Nevertheless, they only drift very slowly. The important fact is that here the
unstable eigenvalue of the Orr-Sommerfeld operator is order $\O (1)$ with
respect to $\epsilon=1/R$ as $\epsilon\rightarrow0^{+}$. This fact should lead
to a transient nonlinear growth near the oscillatory shears (and the linear
shear) which manifests as a transition to turbulence. This has been confirmed
numerically \cite{Lan09}. Here the amplitude of the perturbation from the
linear shear will be measured by the deviation of the oscillatory shears from
the linear shear and the perturbation on top of the oscillatory shears. One
final note is that here the turbulence is often transient, i.e. with a finite
life time after which the flow re-laminates back to the linear shear.

As mentioned in the Introduction, due to the recent discovery of 3D steady
states, there has been a conjecture on the explanation of the Sommerfeld
paradox using the stable manifolds of the 3D steady states. The stable
manifolds can not be connected to the linear shear since it is linearly
stable. The conjecture is that the stable manifolds can get close to the
linear shear, which leads to the transition from the linear shear to
turbulence \cite{GHC08}. The main criticism on this argument is that these 3D
steady states are quite far away from the linear shear even in the velocity
variable. For each of such 3D steady states, its stable manifold will have a
fixed distance from the linear shear due to its Romanov's nonlinear stability
theorem \cite{Rom73}. When the Reynolds number $R$ is large enough, this
distance will be much bigger than the threshold $\O (R^{-\mu})$ of
perturbations given above. This contradiction shows that the stable manifold
explanation is not so satisfactory.

Based upon the fact that the oscillatory shears (\ref{Os}) are linearly
unstable under the 2D Euler dynamics, it is natural to search for an unstable
solution of the Orr-Sommerfeld equation (\ref{Orr-S}) with $U$ given by
(\ref{formula-U_n}). The main difficulty in this search naturally lies at the
boundary conditions. For Rayleigh equation (\ref{Rayleigh}), only two boundary
conditions are necessary while four are required for Orr-Sommerfeld. When
$\epsilon$ is small, of course the other two boundary conditions will create
boundary layer effects. By using the asymptotic expansion theory developed by
W. Wasow \cite{Was48} 60 years ago, we can prove the following.

\begin{theorem}
Let ($\alpha^{0},c^{0}$) be on the unstable eigenmode curve of the Rayleigh
equation associated with the oscillatory shears $U_{n}$ (\ref{formula-U_n}),
which stems from the neutral mode ($\alpha_{0},1/2$). Then for $\epsilon$
sufficiently small, there exists an unstable eigenmode ($\alpha^{0},c^{\ast}$)
with $\operatorname{Im}c^{\ast}>0$ of the Orr-Sommerfeld equation
(\ref{Orr-S}) with $U$ given by (\ref{formula-U_n}). When $\epsilon
\rightarrow0^{+}$, $c^{\ast}\rightarrow c^{0}$. \label{LOS}
\end{theorem}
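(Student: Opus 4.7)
The plan is to treat the Orr-Sommerfeld equation as a singular perturbation of the Rayleigh equation in the small parameter $\epsilon$, using the Wasow asymptotic theory already signalled by the authors. Since $\operatorname{Im} c^{0}>0$, the coefficient $U_{n}(y)-c^{0}$ stays bounded away from zero uniformly on $[0,1]$; hence there is no critical layer to resolve, and the singular perturbation is triggered solely by the two extra boundary conditions $\phi'(0)=\phi'(1)=0$ that have no analog for the Rayleigh problem. I would fix $\alpha=\alpha^{0}$ throughout and seek a branch $c^{\ast}(\epsilon)$ with $c^{\ast}(0)=c^{0}$ along which the Orr-Sommerfeld BVP admits a nontrivial solution.

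First I would construct a basis of four fundamental solutions of (\ref{Orr-S}) on $[0,1]$, depending analytically on $c$ in a complex neighborhood of $c^{0}$. Two of these, call them $\phi_{1}(y;c,\epsilon)$ and $\phi_{2}(y;c,\epsilon)$, are the inviscid (outer) solutions: each admits a regular expansion in $\epsilon$ whose leading term is a Rayleigh solution, obtained by iteratively inverting the Rayleigh operator against the $\epsilon[\partial_{y}^{2}-\alpha^{2}]^{2}$ term. The absence of a critical layer makes this iteration converge. The other two, $\phi_{3}$ and $\phi_{4}$, are viscous boundary-layer solutions of thickness $O(\sqrt{\epsilon})$ centered at $y=0$ and $y=1$, respectively, given to leading order by the WKB ansatz
\[
\phi_{3,4}(y)\sim\exp\left(\pm\int\sqrt{\tfrac{i\alpha^{0}(U_{n}(s)-c)}{\epsilon}}\,ds\right),
\]
with the sign chosen so that the solution decays into the interior. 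Wasow's theorem supplies the rigorous uniform asymptotics for the fundamental matrix as $\epsilon\to 0^{+}$, including the boundary values of the first derivatives needed below.

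Next I would encode the eigenvalue condition as a $4\times 4$ determinant. Writing $\phi=\sum_{j=1}^{4}k_{j}\phi_{j}(y;c,\epsilon)$ and imposing $\phi(0)=\phi'(0)=\phi(1)=\phi'(1)=0$ gives a linear system in $(k_{1},\dots,k_{4})$ whose determinant $\Delta(c,\epsilon)$ is the dispersion relation. In the limit $\epsilon\to 0^{+}$, the viscous solutions concentrate exponentially at the endpoints, so $\Delta(c,\epsilon)$ factors (up to a nonvanishing prefactor built from $\phi_{3},\phi_{4}$ and their derivatives at the boundary) into the Rayleigh Wronskian at the boundary for the outer solutions. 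Because $c^{0}$ is a Rayleigh eigenvalue at $\alpha^{0}$, this limiting object vanishes at $c=c^{0}$. Moreover, the unstable mode curve from Theorem \ref{OST} is obtained by a simple bifurcation from the neutral mode, so the Rayleigh eigenvalue at $(\alpha^{0},c^{0})$ is simple, which translates into $\partial_{c}\Delta(c^{0},0)\neq 0$. The analytic implicit function theorem then yields a unique analytic branch $c^{\ast}(\epsilon)$ with $c^{\ast}(0)=c^{0}$ and $\Delta(c^{\ast}(\epsilon),\epsilon)=0$ for all sufficiently small $\epsilon>0$. Continuity at $\epsilon=0^{+}$ together with $\operatorname{Im} c^{0}>0$ forces $\operatorname{Im} c^{\ast}(\epsilon)>0$.

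The principal obstacle is the rigorous construction of the boundary-layer solutions $\phi_{3},\phi_{4}$ with the precise uniformity in $c$ needed to make $\Delta(c,\epsilon)$ analytic in $c$ and jointly continuous in $(c,\epsilon)$ up to $\epsilon=0^{+}$. Formal WKB is insufficient; one must control the remainder terms after matching in a complex neighborhood of $c^{0}$, which in turn requires that $U_{n}(y)-c$ remain in an appropriate sector of $\mathbb{C}\setminus\{0\}$ uniformly for $y\in[0,1]$ and $c$ in that neighborhood. This is exactly what $\operatorname{Im} c^{0}>0$ guarantees and what would fail if one tried to continue $c^{\ast}$ down to the neutral value $c=1/2$, where a genuine Tollmien-type critical layer appears and the clean implicit function theorem argument breaks down.
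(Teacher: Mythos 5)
Your proposal follows the paper's overall strategy: construct four asymptotic solutions of the Orr-Sommerfeld equation via Wasow's theory, form a $4\times 4$ secular determinant, and perturb the limiting Rayleigh dispersion relation to find $c^{\ast}$ near $c^{0}$. Two points differ substantively, and one of them conceals a real gap.

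First, the assertion that ``there is no critical layer to resolve'' understates the technical content. While $U_{n}(y)-c^{0}$ has no real zero on $[0,1]$, the complex turning point $\hat{y}$ with $U_{n}(\hat{y})=c$ lies in a small complex neighborhood of $y=1/2$, and Wasow's uniform asymptotics are valid only in sectors of the complex $y$-plane determined by the Stokes curves $\{\operatorname{Re}(\gamma Q(y))=0\}$ emanating from $\hat{y}$, where $Q(y)=\int_{\hat{y}}^{y}\sqrt{-(U_{n}(s)-c)}\,ds$. The crucial step in the paper --- and the step the authors explicitly identify in a remark as missing from Morawetz's earlier attempt --- is the geometric argument (Figures \ref{rods}--\ref{rod2}) showing that the entire real interval $[0,1]$ lies inside the interior of two such sectors $S_{1}\cup S_{2}$ whenever $c$ is near $c^{0}$ but $\neq 1/2$. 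You acknowledge that ``one must control the remainder terms after matching'' but frame the required condition as a pointwise sector condition on $U_{n}(y)-c$; the actual condition is a global one on the sign of $\operatorname{Re}(\gamma Q(y))$ along $[0,1]$, and verifying it is what guarantees that the four Wasow solutions exist, are linearly independent, and depend analytically on $c$ uniformly on the whole interval. Without this, your $\Delta(c,\epsilon)$ is not yet well-defined as an analytic function on $B_{\delta_{1}}(c^{0})$, so the concluding zero-perturbation step has nothing to act on.

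Second, you locate $c^{\ast}$ via the analytic implicit function theorem, which requires $\partial_{c}\Delta_{0}(c^{0})\neq 0$, i.e., that $c^{0}$ is a simple zero of the limiting Rayleigh dispersion function $\Delta_{0}$. You infer this from ``simple bifurcation from the neutral mode,'' but simplicity of the Rayleigh eigenvalue at $(\alpha^{0},c^{0})$ is not established by Theorem \ref{OST}, and simplicity at a neutral endpoint need not propagate along the entire unstable branch. The paper instead applies Rouch\'{e}'s theorem to $\Delta_{1}(c,\gamma)=K\Delta_{0}(c)+\O(\gamma^{-1})$: since $\Delta_{0}$ is analytic with $c^{0}$ an isolated zero, one chooses $\delta_{1}$ so that $\Delta_{0}$ is nonvanishing on $\partial B_{\delta_{1}}(c^{0})$, and then for $\gamma$ large the $\O(\gamma^{-1})$ perturbation cannot alter the zero count inside the disk. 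That route needs no simplicity hypothesis and yields the same conclusion; you should replace the implicit-function-theorem step with Rouch\'{e}.
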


\begin{proof}
Rewrite the Orr-Sommerfeld equation (\ref{Orr-S}) in the following form:
\begin{equation}
\left[  \partial_{y}^{2}-\alpha^{2}\right]  ^{2}\phi+\gamma^{2}\left[
b(y)\left(  \partial_{y}^{2}-\alpha^{2}\right)  +U_{n}^{\prime\prime}\right]
\phi=0, \label{rwos}%
\end{equation}
where $U_{n}$ is given by the oscillatory shears (\ref{Os}),
\[
\gamma^{2}=i\alpha R,\quad b(y)=-(U_{n}-c).
\]
Let ($\alpha^{0},c^{0}$) be on the unstable eigenvalue curve of the linear 2D
Euler operator ($\epsilon=0$ in (\ref{Orr-S})), stemming from the neutral mode
($\alpha_{n},1/2$) as identified by Theorem \ref{OST}, $c^{0}=c_{r}^{0}%
+ic_{i}^{0}$, $c_{i}^{0}>0$ ($c_{r}^{0}$ is near $1/2$). We will study the
segment where $c^{0}$ is sufficiently close to $1/2$, and the region $c\in
B_{\delta_{1}}(c^{0})$ --- a circular disc of radius $0<\delta_{1}<\frac{1}%
{2}c_{i}^{0}$ and centered at $c^{0}$. See Figure \ref{alph}.

\begin{figure}[th]
\vspace{0.5in} \begin{tikzpicture}[scale=1.5]
\draw (2,0) -- (2,4) (0,2) -- (4,2);
\draw (2,3.75) -- (2.25,3.75) -- (2.25,4);
\draw (3,2) .. controls (3.0625,2.25) and (3,3) .. (2.125,3);
\draw (2.75,2.8125) circle (.125) node [right=4pt] {$B_{\delta_1(c^0)}$};
\draw[fill] (2.75,2.8125) circle (.03125);
\draw[fill] (3,2) circle (.03125) node [below] {$\frac{1}{2}$};
\draw (2.125,3.875) node {$c$};
\end{tikzpicture}
\caption{A $c$-plane illustration.}%
\label{alph}%
\end{figure}
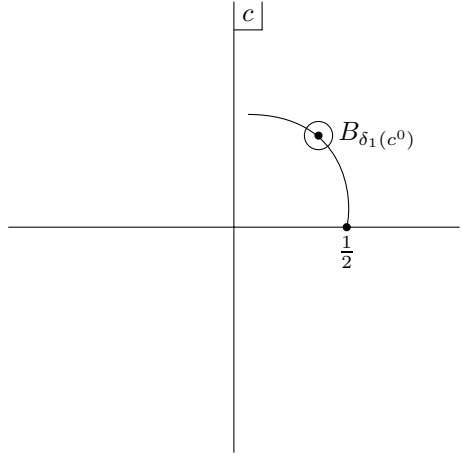\begin{figure}[th]
\vspace{0.5in} \begin{tikzpicture}[scale=2]
\draw (1,0) -- (1,4);
\draw (0,2) -- (4,2);
\draw (2.125,2.125) -- (2.125,1.25) node [below] {$C_3$};
\draw (2.125,2.125) -- (1.3125,2.625) node [above] {$C_2$};
\draw (2.125,2.125) -- (2.9325,2.625) node [above] {$C_1$};
\draw (2.125,2) ellipse (1.5 and .75);
\draw (2.125,2.125) node [above=6pt] {$S_3$};
\draw (1.375,2.25) node {$S_1$};
\draw (2.75,2.25) node {$S_2$};
\draw[fill] (2,2) circle (.03125) node[below] {$\frac{1}{2}$};
\draw[fill] (3,2) circle (.03125) node[below] {$1$};
\draw[fill] (1,2) circle (.03125) node[below right] {$0$};
\draw[fill] (2.125,2.125) circle (.03125) node[right=2pt] {$\hat{y}$};
\draw (1.125,3.875) node {$y$};
\draw (1,3.75) -- (1.25,3.75) -- (1.25,4);
\draw (2.5,1) node {$S$};
\end{tikzpicture}
\caption{A $y$-plane illustration.}%
\label{rods}%
\end{figure}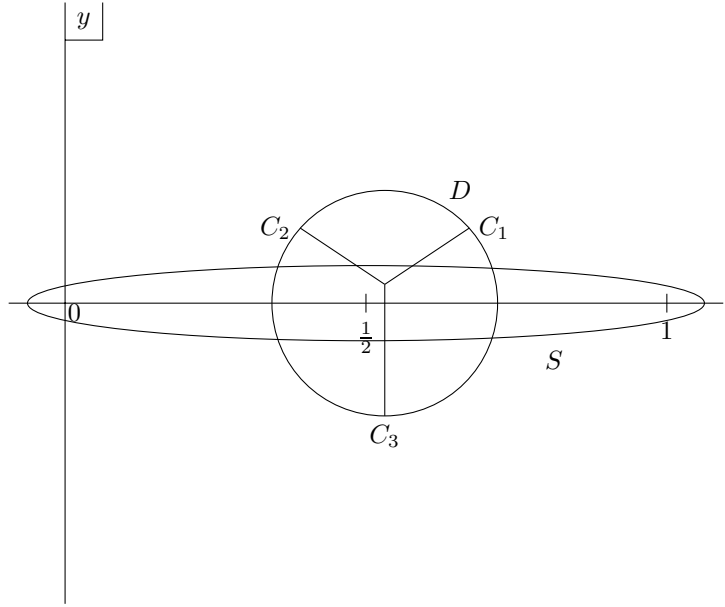
\begin{figure}[th]
\vspace{0.5in} \begin{tikzpicture}[scale=2]
\draw (0,0) -- (0,4) (-.375,2) -- (4.375,2) (0,3.75) -- (.25,3.75) -- (.25,4);
\draw (2,2) ellipse (2.25 and .25);
\draw (2.125,2) circle (.75);
\draw (2.125,2.125) -- (2.125,1.25) node [below] {$C_3$};
\draw (2.125,2.125) -- (2.6875,2.5) node [right] {$C_1$};
\draw (2.125,2.125) -- (1.5625,2.5) node [left] {$C_2$};
\draw (2,2.0625) -- (2,1.9375) node [below] {$\frac{1}{2}$};
\draw (4,2.0625) -- (4,1.9375) node [below] {$1$};
\draw (.125,3.875) node {$y$};
\draw (3.25,1.625) node {$S$};
\draw (2.625,2.75) node {$D$};
\draw (.0625,1.9375) node {$0$};
\end{tikzpicture}
\caption{Another $y$-plane illustration.}%
\label{rod2}%
\end{figure}

\noindent We know that $U_{n}(y)-1/2$ has only one zero $y=1/2$ on the real
segment $y\in\lbrack0,1]$. Thus we can find a rod region $S$ around $[0,1]$,
in which $U_{n}(y)-1/2$ has only one zero at $y=1/2$. See Figure \ref{rods}.
Then by Rouch\'{e} theorem, when $c$ is sufficiently close to $1/2$,
$U_{n}(y)-c$ also has the only zero $\hat{y}$ near $1/2$:
\[
\hat{y}=\frac{1}{2}+\frac{1}{1+4\pi A}\left(  c-\frac{1}{2} \right)
+\O \left(  c-\frac{1}{2} \right)  ^{2},
\]
and
\begin{equation}
U_{n}^{\prime}(\hat{y})=(1+4\pi A)+\O \left(  c-\frac{1}{2} \right)  ^{2}%
\neq0. \label{deriv}%
\end{equation}
Define
\[
Q(y)=\int_{\hat{y}}^{y}\sqrt{-b(s)}ds.
\]
As $y$ circles around $\hat{y}$ once in counter-clockwise direction,
$\arg(\gamma Q(y))$ increases by $3\pi$. So there are three curves $C_{j}$
($j=1,2,3$) stemming from $\hat{y}$, on which $Re(\gamma Q(y))=0$. By the
derivative formula (\ref{deriv}), there is a disc $D$ centered at $y=1/2$, the
radius of which is independent of $c$, such that inside the disc, the three
curves $C_{j}$ can be well approximated by using
\[
U_{n}-c\sim(1+4\pi A)(y-\hat{y}).
\]
We can choose the rod region $S$ so thin that it penetrates $D$ as shown in
Figure \ref{rod2}. Direct calculation reveals that $C_{1}$ has an angle of
$\pi/6$, and the angles between neighboring $C_{j}$'s are $2\pi/3$. When
$c=1/2$, along the two pieces of the interval [$0,1$] outside the disc $D$,
$\arg(\gamma Q(y))$ is not changing. Thus when $c$ is sufficiently close to
$1/2$ but not equal to $1/2$, the entire real interval $[0,1]$ lies inside the
two sectors $S_{1}\cup S_{2}$ ($\hat{y}$ not included, Figure \ref{rods}).
Inside the two sectors $S_{1}\cup S_{2}$, Wasow obtained the following results
\cite{Was48}: There are four linearly independent solutions to the
Orr-Sommerfeld equation (\ref{rwos}) in the forms,
\begin{align}
\phi_{j}  &  =e^{\gamma Q(y)}\left[  \sum_{n=0}^{N}\psi_{jn}(y)\gamma
^{-n}+f(S_{1}\cup S_{2})\gamma^{-N-1}\right]  ,\quad(j=1,2);\label{asol1}\\
\phi_{\ell}  &  =\psi_{\ell}(y)+f(S_{1}\cup S_{2})\gamma^{-2},\quad(\ell=3,4);
\label{asol2}%
\end{align}
where $Re(\gamma Q(y))<0$ in $S_{j}$ for $\phi_{j}$, and changes sign when $y$
crosses into the other sector; $\psi_{jn}$ are analytic in $S_{1}\cup S_{2}$,
$N$ is a large number, $f(S_{1}\cup S_{2})$ denotes any function which,
together with all its derivatives in $y$, is uniformly bounded in $\gamma$ in
every closed subdomain of $S_{1}\cup S_{2}$, and $\psi_{\ell}(y)$ are two
linearly independent solutions to the $\epsilon=0$ Orr-Sommerfeld equation
(\ref{Orr-S}). In particular, here $\phi_{j}$ ($j=1,2,3,4$) are four linearly
independent analytic solutions on the entire real interval $[0,1]$. The
eigenvalues of the Orr-Sommerfeld operator (\ref{rwos}) are given by the zeros
of the determinant
\[
\Delta(c,\gamma)=\left\vert
\begin{array}
[c]{lccr}%
\phi_{1}(0) & \phi_{2}(0) & \phi_{3}(0) & \phi_{4}(0)\cr\phi_{1}^{\prime}(0) &
\phi_{2}^{\prime}(0) & \phi_{3}^{\prime}(0) & \phi_{4}^{\prime}(0)\cr\phi
_{1}(1) & \phi_{2}(1) & \phi_{3}(1) & \phi_{4}(1)\cr\phi_{1}^{\prime}(1) &
\phi_{2}^{\prime}(1) & \phi_{3}^{\prime}(1) & \phi_{4}^{\prime}(1)\cr
\end{array}
\right\vert .
\]
Each entry of $\Delta(c,\gamma)$ is analytic in $c\in B_{\delta_{1}}(c^{0})$,
($c^{0}\neq1/2$). Wasow obtained the following expression \cite{Was48},
\[
\Delta(c,\gamma)=\gamma^{2}e^{\gamma(Q(0)+Q(1))}\left[  K\Delta_{0}%
(c)+\O (\gamma^{-1})\right]  ,
\]
where
\[
\Delta_{0}(c)=\left\vert
\begin{array}
[c]{lr}%
\psi_{3}(0) & \psi_{4}(0)\cr\psi_{3}(1) & \psi_{4}(1)\cr
\end{array}
\right\vert ,
\]
and $Re(\gamma Q(0))>0$, $Re(\gamma Q(1))>0$, $K$ is a non-zero constant. Let
\[
\Delta_{1}(c,\gamma)=K\Delta_{0}(c)+\O (\gamma^{-1}).
\]
Notice that the unstable eigenvalue $c^{0}$ of the linear 2D Euler operator is
a zero of $\Delta_{0}(c)$. Here we fix $\alpha=\alpha^{0}$. By a proper choice
of $\delta_{1}$, $\Delta_{0}(c)$ is non-zero on the boundary of $B_{\delta
_{1}}(c^{0})$. Then when $\gamma$ is sufficiently large, by Rouch\'{e}
theorem, $\Delta_{1}(c,\gamma)$ also has a zero $c^{\ast}$ near $c^{0}$. As
$\gamma\rightarrow\infty$, $c^{\ast}\rightarrow c^{0}$. The proof is complete.
\end{proof}

\begin{remark}
The key point in the above proof is to show how to embed the entire real
interval [$0,1$] inside the interior of the two sectors $S_{1}\cup S_{2}$.
This key point was missing in \cite{Mor52}.
\end{remark}

Next we will develop an expression for the eigenfunction needed for later
studies. First we need a lemma.

\begin{lemma}
$Re(\gamma Q(y))$ is a strictly monotone function on $y\in\lbrack0,1]$.
\end{lemma}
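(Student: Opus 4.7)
The plan is to reduce the monotonicity of $\operatorname{Re}(\gamma Q(y))$ to a positivity statement for its $y$-derivative. Since $Q(y)=\int_{\hat{y}}^{y}\sqrt{-b(s)}\,ds = \int_{\hat{y}}^{y}\sqrt{U_n(s)-c}\,ds$, one has
\[
\frac{d}{dy}\operatorname{Re}(\gamma Q(y)) = \operatorname{Re}\bigl(\gamma\sqrt{U_n(y)-c}\bigr),
\]
so the lemma will follow once I show that $w(y):=\gamma\sqrt{U_n(y)-c}$, for a continuous branch along $y\in[0,1]$, has $\operatorname{Re} w(y)$ of constant nonzero sign. First I would note that, by the formula for $\hat{y}$ given just before (\ref{deriv}) and the fact that $c\in B_{\delta_1}(c^0)$ with $\delta_1<\tfrac12 c_i^0$ forces $\operatorname{Im} c>\tfrac12 c_i^0>0$, the zero $\hat{y}$ is non-real. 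Hence $U_n(y)-c$ never vanishes on the real segment $[0,1]$ and a single continuous branch of $w$ is available there.

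Next I would square: $w(y)^2 = \gamma^2(U_n(y)-c) = i\alpha R\bigl(U_n(y)-c\bigr)$. Since $U_n(y)$ is real on $[0,1]$,
\[
\operatorname{Re}(w(y)^2) = -\alpha R\,\operatorname{Im}(U_n(y)-c) = \alpha R\,\operatorname{Im} c > 0
\]
uniformly in $y\in[0,1]$. Thus $w(y)^2$ stays in the open right half-plane for every $y\in[0,1]$.

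The square root of a point in the open right half-plane lies either in the sector $\{|\arg \zeta|<\pi/4\}$ or in its antipode $\{|\arg\zeta-\pi|<\pi/4\}$; by continuity and connectedness of $[0,1]$, the image $w([0,1])$ sits inside a single one of these two sectors. In either sector $|\arg w(y)|$ is bounded away from $\pi/2$, so $\operatorname{Re} w(y)$ has constant nonzero sign on $[0,1]$, which is exactly the strict monotonicity claim for $\operatorname{Re}(\gamma Q(y))$.

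The only subtle point, and the one I expect to be the main piece of bookkeeping, is branch consistency: I must check that the branch of $\sqrt{-b}$ implicitly fixed by Wasow when defining $Q$ agrees, up to an overall sign, with the continuous branch of $w$ used above. This should be automatic because $[0,1]\setminus\{\hat y\}$ lies in the sectors $S_1\cup S_2$ where Wasow's asymptotic solutions (\ref{asol1})-(\ref{asol2}) are defined and $w$ is nonvanishing; the residual sign ambiguity only decides whether $\operatorname{Re}(\gamma Q(y))$ is strictly increasing or strictly decreasing, not the monotonicity itself. Beyond tracking this sign I do not anticipate a genuine obstacle.
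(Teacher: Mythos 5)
Your argument is correct and is essentially the same as the paper's: both proofs compute $\frac{d}{dy}\operatorname{Re}(\gamma Q(y))=\operatorname{Re}\bigl(\gamma\sqrt{U_n(y)-c}\bigr)$, observe that the quantity under the square root, $\gamma^2(U_n-c)=i\alpha R(U_n-c)=\alpha R\bigl(c_i+i(U_n-c_r)\bigr)$, has strictly positive real part $\alpha R c_i$ because $c_i=\operatorname{Im}c>0$, and conclude that the derivative never vanishes on $[0,1]$, hence has constant sign by continuity, giving strict monotonicity. You phrase the final step geometrically (the image of $w$ lies in one of two sectors bounded away from the imaginary axis), while the paper phrases it algebraically ($\operatorname{Re}\sqrt{z}=0$ would force $z$ onto the nonpositive real axis, contradicting $\operatorname{Re}z=c_i>0$); these are the same observation. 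Your extra remarks on why $U_n-c\neq 0$ on $[0,1]$ and on branch consistency are correct housekeeping that the paper leaves implicit.
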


\begin{proof}%
\begin{align*}
\frac{d}{dy}Re(\gamma Q(y))  &  =\frac{d}{dy}[Re(\gamma Q(y))-Re(\gamma
Q(0))]\\
&  =\sqrt{\alpha R}\frac{d}{dy}Re\int_{0}^{y}\sqrt{c_{i}+i(U_{n}-c_{r})}ds\\
&  =\sqrt{\alpha R}Re\sqrt{c_{i}+i(U_{n}-c_{r})}.
\end{align*}
For the above to be zero, we need $c_{i}\leq0$ and $U_{n}-c_{r}=0$. Since
$c_{i}>0$ in our case, the above is never zero This proves the lemma.
\end{proof}

The eigenfunction of the Orr-Sommerfeld operator corresponding to the unstable
eigenvalue given by the above theorem is given by
\[
\phi=\left\vert
\begin{array}
[c]{lccr}%
\phi_{1}(y) & \phi_{2}(y) & \phi_{3}(y) & \phi_{4}(y)\cr\phi_{1}^{\prime}(0) &
\phi_{2}^{\prime}(0) & \phi_{3}^{\prime}(0) & \phi_{4}^{\prime}(0)\cr\phi
_{1}(1) & \phi_{2}(1) & \phi_{3}(1) & \phi_{4}(1)\cr\phi_{1}^{\prime}(1) &
\phi_{2}^{\prime}(1) & \phi_{3}^{\prime}(1) & \phi_{4}^{\prime}(1)\cr
\end{array}
\right\vert ,
\]
where $\phi_{j}$ ($j=1,2,3,4$) are given by (\ref{asol1})-(\ref{asol2}). We
need to know the asymptotic property of $\phi$ as $\gamma\rightarrow\infty$.
For this, the only trouble maker is the exponent $\gamma Q(y)$. We choose the
convention
\begin{equation}
Re(\gamma Q(0))<0<Re(\gamma Q(1)). \label{cvt}%
\end{equation}
Some of the entries in the expression of $\phi$ are exponentially small in
$\gamma$. Dropping these entries, we have
\begin{align}
\phi &  =\left\vert
\begin{array}
[c]{lccr}%
\phi_{1}(y) & \phi_{2}(y) & \phi_{3}(y) & \phi_{4}(y)\cr0 & \phi_{2}^{\prime
}(0) & \phi_{3}^{\prime}(0) & \phi_{4}^{\prime}(0)\cr\phi_{1}(1) & 0 &
\phi_{3}(1) & \phi_{4}(1)\cr\phi_{1}^{\prime}(1) & 0 & \phi_{3}^{\prime}(1) &
\phi_{4}^{\prime}(1)\cr
\end{array}
\right\vert \nonumber\\
&  =\left\vert
\begin{array}
[c]{lr}%
\phi_{3}(1) & \phi_{4}(1)\cr\phi_{3}^{\prime}(1) & \phi_{4}^{\prime}(1)\cr
\end{array}
\right\vert \phi_{2}^{\prime}(0)\phi_{1}(y)-\left\vert
\begin{array}
[c]{lcr}%
0 & \phi_{3}^{\prime}(0) & \phi_{4}^{\prime}(0)\cr\phi_{1}(1) & \phi_{3}(1) &
\phi_{4}(1)\cr\phi_{1}^{\prime}(1) & \phi_{3}^{\prime}(1) & \phi_{4}^{\prime
}(1)\cr
\end{array}
\right\vert \phi_{2}(y)\nonumber\\
&  -\left\vert
\begin{array}
[c]{lr}%
\phi_{1}(1) & \phi_{4}(1)\cr\phi_{1}^{\prime}(1) & \phi_{4}^{\prime}(1)\cr
\end{array}
\right\vert \phi_{2}^{\prime}(0)\phi_{3}(y)+\left\vert
\begin{array}
[c]{lr}%
\phi_{1}(1) & \phi_{3}(1)\cr\phi_{1}^{\prime}(1) & \phi_{3}^{\prime}(1)\cr
\end{array}
\right\vert \phi_{2}^{\prime}(0)\phi_{4}(y) \label{EFE}%
\end{align}
The dominant terms are the third and the fourth terms, which are of order
\[
\O \left(  \gamma^{2}e^{Re(\gamma Q(1))-Re(\gamma Q(0))}\right)
\]
under the convention (\ref{cvt}). By rescaling $\phi$,
\[
\phi_{\ast}=\gamma^{-2}e^{Re(\gamma Q(0))-Re(\gamma Q(1))}\phi;
\]
we see that
\[
\phi_{\ast}=C[\phi_{4}(1)\phi_{3}(y)-\phi_{3}(1)\phi_{4}(y)]+\O (\gamma
^{-1}).
\]
Notice that the quantity $\phi_{4}(1)\phi_{3}(y)-\phi_{3}(1)\phi_{4}(y)$ is
not zero, but small of order $\O (\gamma^{-1})$. It depends upon $c^{\ast}$.
When $\gamma\rightarrow\infty$, it does approach zero. Taking derivatives of
$\phi$ in $y$, the balance of orders shifts to the first two terms in
(\ref{EFE}). One can clearly see that
\[
\Vert\phi_{\ast}\Vert_{H^{s}}\sim\O \left(  |\gamma|^{s-1}\right)
,\quad\text{as }\gamma\rightarrow\infty.
\]
In fact, it is a matter of scaling. If we scale $\phi$ as
\[
\phi_{\ast}=\gamma^{-N}e^{Re(\gamma Q(0))-Re(\gamma Q(1))}\phi
\]
for any $N$, then
\[
\Vert\phi_{\ast}\Vert_{H^{s}}\sim\O \left(  |\gamma|^{s-N+1}\right)
,\quad\text{as }\gamma\rightarrow\infty.
\]

\section{A Bifurcation to Kelvin's Cat's Eyes}

The bifurcation we are seeking will stem out of the oscillatory shear
(\ref{Os}). Since there is no external parameter in 2D Euler equation, the
bifurcation here is with regard to an internal parameter: the wave number
along $x$-direction. The proof of Theorem \ref{OST} shows that there is a
neutral mode $\left(  \alpha,c,\phi\right)  =\left(  \alpha_{n},\frac{1}%
{2},\phi_{n}\right)  $ to the Rayleigh equation (\ref{Rayleigh}) associated
with the oscillatory shear $U_{n}\left(  y\right)  \ $(\ref{formula-U_n}).
Here, $-\alpha_{n}^{2}=\lambda_{1}$ is the negative eigenvalue of the operator
$L=-\frac{d^{2}}{dy^{2}}+Q(y)$ with $Q(y)$ defined by (\ref{ExQ}), and
$\phi_{n}$ is the corresponding eigenfunction. By the proof of Theorem
\ref{OST}, $\alpha_{n}\geq c_{0}n$.\ By Remark \ref{remark-inviscid-insta} 3),
we can choose $\phi_{n}\left(  y\right)  $ such that $\phi_{n}\left(
y\right)  >0$ when $y\in\left(  0,1\right)  $. The following theorem shows
that we can get nontrivial inviscid travelling waves bifurcating from the
above neutral modes.

\begin{theorem}
\label{thm-catseye}There is a local bifurcation curve of travelling wave
solutions to the 2D Euler equation with the stream function $\psi=\psi
^{\alpha}(\alpha\left(  x-\frac{1}{2}t\right)  ,y)$, which stems out of the
oscillatory shear $U_{n}\left(  y\right)  \ $(\ref{Os}). Here $\alpha$ is near
$\alpha_{n}\geq c_{0}n$ (Theorem \ref{OST}),
\[
\psi^{\alpha}(\xi,y)\in C^{2,\beta}\left(  \left(  0,2\pi\right)
\times\left(  0,1\right)  \right)  \ (0<\beta<1)
\]
is periodic and even in $\xi$ of period $2\pi$, and $\psi^{\alpha}$ is
constant on $\left\{  y=0\right\}  $ and $\left\{  y=1\right\}  $. Near
$y=1/2$, the streamlines of these new fixed points have a Kelvin cat's eye
structure, with a leading order expression given by (\ref{cats-eye}).
\end{theorem}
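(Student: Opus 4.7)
The plan is to apply a Crandall--Rabinowitz bifurcation-from-a-simple-eigenvalue argument with the wave number $\alpha$ as bifurcation parameter, using the same Sturm--Liouville operator $L=-d^{2}/dy^{2}+Q(y)$ of \eqref{SLO} that encoded the inviscid instability in Theorem \ref{OST}. Setting $c=\tfrac{1}{2}$ and passing to the moving frame via $\psi^{c}=\psi-cy$, the steady travelling-wave condition becomes the semilinear elliptic equation $-\Delta\psi^{c}=F(\psi^{c})$, with the nonlinearity $F$ determined by the base shear through $F(\Psi_{n}^{c}(y))=-U_{n}'(y)$, where $\Psi_{n}^{c}(y)=\int_{0}^{y}(U_{n}(s)-\tfrac{1}{2})\,ds$. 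The profile $\Psi_{n}^{c}$ is not monotone on $[0,1]$: it attains a minimum $-\tfrac{1}{8}$ at $y=\tfrac{1}{2}$ and is even under $h\mapsto-h$ about that point (because $\sin(2n\pi+\theta)=\sin\theta$). The same even symmetry holds for $U_{n}'$, so the two local branches of $y\mapsto\Psi_{n}^{c}(y)$ produce the same value of $-U_{n}'(y)$, and $F$ is well-defined and smooth as a function of $\psi^{c}+\tfrac{1}{8}\sim(y-\tfrac{1}{2})^{2}$ on a neighbourhood of $[-\tfrac{1}{8},0]$.

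Writing $\psi^{c}=\Psi_{n}^{c}(y)+\phi(\xi,y)$ with $\xi=\alpha x\in[0,2\pi]$ and subtracting the background equation, the problem reduces to
\[
T_{\alpha}\phi\;:=\;-\alpha^{2}\partial_{\xi}^{2}\phi-\partial_{y}^{2}\phi+Q(y)\,\phi\;=\;N(\phi),
\]
where $Q(y)=U_{n}''(y)/(U_{n}(y)-\tfrac{1}{2})$ is smooth on $[0,1]$ (the inflection point coincides with the critical layer) and $N(\phi)=F(\Psi_{n}^{c}+\phi)-F(\Psi_{n}^{c})-F'(\Psi_{n}^{c})\phi=O(\phi^{2})$. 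I would work in the Banach spaces $X=\{\phi\in C^{2,\beta}([0,2\pi]\times[0,1]):\phi|_{y=0,1}=0,\ \phi\text{ even and }2\pi\text{-periodic in }\xi\}$ and $Y=C^{0,\beta}$ with the same parity/periodicity, so that any solution automatically inherits the regularity and $\xi$-symmetry asserted in the statement, and $\psi^{c}=0$ on both boundaries (since $\Psi_{n}^{c}(0)=\Psi_{n}^{c}(1)=0$).

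The spectral analysis then reduces to Sturm--Liouville. Fourier-expanding $\phi=\sum_{k\ge 0}a_{k}(y)\cos(k\xi)$ diagonalizes $T_{\alpha}$ into the family $La_{k}=-k^{2}\alpha^{2}a_{k}$ with Dirichlet data. By Remark \ref{remark-inviscid-insta}(3), $\lambda_{1}=-\alpha_{n}^{2}$ is the unique negative Dirichlet eigenvalue of $L$, simple, with a positive eigenfunction $\phi_{n}$; all higher eigenvalues satisfy $\lambda_{j}\ge 0$. Hence at $\alpha=\alpha_{n}$ the kernel of $T_{\alpha_{n}}$ on $X$ equals $\mathrm{span}\{\Phi\}$ with $\Phi(\xi,y)=\phi_{n}(y)\cos\xi$: the $k=1$ mode gives $\phi_{n}$; the modes $k\ge 2$ would require $-k^{2}\alpha_{n}^{2}<\lambda_{1}$ to be an eigenvalue; and the $k=0$ mode is excluded provided $0$ is not a Dirichlet eigenvalue of $L$ (a generic condition, securable by a small adjustment of the parameters). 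Self-adjointness of $T_{\alpha_{n}}$ in $L^{2}$ yields codimension one for its range, and transversality follows from
\[
\partial_{\alpha}T_{\alpha}|_{\alpha=\alpha_{n}}\Phi=-2\alpha_{n}\partial_{\xi}^{2}\Phi=2\alpha_{n}\phi_{n}(y)\cos\xi,
\]
whose $L^{2}$ inner product with $\Phi$ equals $2\alpha_{n}\pi\int_{0}^{1}\phi_{n}^{2}\,dy>0$, so it does not lie in $\mathrm{Range}\,T_{\alpha_{n}}$. Crandall--Rabinowitz then produces a $C^{1}$ local curve $s\mapsto(\alpha(s),\phi(s))$ of nontrivial solutions with $\alpha(0)=\alpha_{n}$ and $\phi(s)=s\Phi+o(s)$ in $X$.

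To recognize the cat's-eye structure, I expand the leading-order moving-frame stream function near $(\xi,y)=(\pi,\tfrac{1}{2})$. The symmetry above gives $\Psi_{n}^{c}(y)=-\tfrac{1}{8}+\tfrac{1+4\pi A}{2}(y-\tfrac{1}{2})^{2}+O((y-\tfrac{1}{2})^{4})$, and $\phi_{n}(\tfrac{1}{2})>0$, so
\[
\psi^{c}(\xi,y)=-\tfrac{1}{8}+\tfrac{1+4\pi A}{2}(y-\tfrac{1}{2})^{2}+s\,\phi_{n}(\tfrac{1}{2})\cos\xi+O(s^{2})+O((y-\tfrac{1}{2})^{4}),
\]
whose leading-order level sets exhibit an elliptic centre at $(\pi,\tfrac{1}{2})$, a hyperbolic saddle at $(0,\tfrac{1}{2})$, and closed separatrices around the centre---Kelvin's cat's eyes, matching the asymptotic formula \eqref{cats-eye}. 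The principal technical obstacles I expect are (i) verifying that $F$ is genuinely $C^{k}$ through the fold of $\Psi_{n}^{c}$ at $y=\tfrac{1}{2}$---handled by the even-symmetry cancellation above, which forces the square-root branch to be single-valued and smooth---and (ii) ruling out the $k=0$ Fourier mode in $\ker T_{\alpha_{n}}$, i.e., verifying that $0$ is not a Dirichlet eigenvalue of $L$, which can be arranged either by a perturbative argument for large $n$ or by a mild perturbation of $A$.
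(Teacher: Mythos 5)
Your proposal follows essentially the same route as the paper's proof: determine the nonlinearity $f$ from the shear using the symmetry of the relative stream function and of $Q(y)$ (\ref{ExQ}) about $y=\frac{1}{2}$, pose the problem in H\"older spaces of even, $2\pi$-periodic functions of $\xi=\alpha x$ vanishing at $y=0,1$, identify the linearization $\alpha^{2}\partial_{\xi}^{2}+\partial_{y}^{2}-Q(y)$, reduce its kernel to the Sturm--Liouville operator (\ref{SLO}), verify the Crandall--Rabinowitz transversality by differentiating in the wave-number parameter, and read off the cat's eye from the leading-order expression (\ref{cats-eye}). Your explicit check that $f$ is smooth across the fold value at $y=\frac{1}{2}$ (via evenness in $y-\frac{1}{2}$) is a useful point the paper passes over quickly.

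The one place you fall short of the theorem as stated is the exclusion of the $k=0$ Fourier mode from the kernel: you leave it to genericity (``a mild perturbation of $A$,'' or an unspecified large-$n$ argument), which would only yield the result for generic parameters, whereas the theorem is asserted for every $A\in\left(\frac{1}{8\pi},\frac{1}{4\pi}\right)$ and every $n$. No adjustment is needed. By Remark \ref{remark-inviscid-insta} 3), proved in the Appendix, $\lambda_{2}\geq 0$; and the same Sturm-type comparison there in fact excludes $\lambda_{2}=0$ as well: the function $U_{n}(y)-\frac{1}{2}$ solves $-\varphi''+Q\varphi=0$ with exactly one interior zero and nonvanishing boundary values, so in identity (\ref{phe4}) with $\lambda_{2}=0$ the right-hand side vanishes while the boundary term is strictly nonzero (note $\varphi_{2}'(0)\neq 0$ by uniqueness for the ODE). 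Hence $0$ is never a Dirichlet eigenvalue of $L$, the $k=0$ mode is ruled out unconditionally, and $\ker$ of the linearized operator is exactly $\{\phi_{n}(y)\cos\xi\}$ for the given $U_{n}$. With that replacement your argument coincides with the paper's proof.
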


\begin{proof}
Let $\xi=\alpha x$ where $\alpha$ will serve as the bifurcation parameter. Let
$\psi_{\text{rel}}(\xi,y)$ to be the relative stream function in the reference
frame $\left(  x-\frac{1}{2}t,y\right)  $, that is,
\[
\psi_{\text{rel}}(\xi,y)=\psi(\xi,y)-\frac{1}{2}y.
\]
The travelling waves solves the Poisson's equation
\begin{equation}
F(\psi,\alpha^{2})\equiv\alpha^{2}\frac{\partial^{2}\psi_{\text{rel}}%
}{\partial\xi^{2}}+\frac{\partial^{2}\psi_{\text{rel}}}{\partial y^{2}}%
-f(\psi_{\text{rel}})=0 \label{Bmap}%
\end{equation}
for some function $f$, with the boundary conditions that $\psi_{\text{rel}}$
takes constant values on $\left\{  y=0\right\}  $ and $\left\{  y=1\right\}
$. Since we seek nontrivial travelling waves near the oscillatory shear
(\ref{Os}), we demand that the oscillatory shear (\ref{Os}) satisfies
(\ref{Bmap}) too. That is
\begin{equation}
\frac{\partial^{2}\psi_{\text{rel}}^{\ast}}{\partial y^{2}}=f(\psi
_{\text{rel}}^{\ast}) \label{eqn-shear}%
\end{equation}
where
\[
\psi_{\text{rel}}^{\ast}\left(  y\right)  =\frac{1}{2}\left(  y-\frac{1}%
{2}\right)  ^{2}-\frac{A}{4n^{2}\pi}\cos\left(  4n\pi y\right)  +\frac
{A}{4n^{2}\pi}%
\]
is the relative stream function for the the oscillatory shear (\ref{Os}). It
follows from (\ref{eqn-shear}) that
\begin{equation}
f^{\prime}(\psi_{\text{rel}}^{\ast})=\frac{U_{n}^{\prime\prime}(y)}%
{U_{n}(y)-\frac{1}{2}}=Q(y), \label{eqn-shear-1}%
\end{equation}
where $Q(y)$ is defined before in (\ref{ExQ}). Since both $\psi_{\text{rel}%
}^{\ast}\left(  y\right)  $ and $Q(y)$ are symmetric to the line $y=\frac
{1}{2}$, we only need to check (\ref{eqn-shear-1}) for $0\leq y\leq\frac{1}%
{2}$. The function $\psi_{\text{rel}}^{\ast}\left(  y\right)  $ is monotone on
$\left[  0,\frac{1}{2}\right]  $ since $\psi_{\text{rel}}^{\ast\prime}\left(
y\right)  =U_{n}(y)-\frac{1}{2}<0$ on $[0,\frac{1}{2})$. Therefore we have
\[
f^{\prime}=Q\circ\left(  \psi_{\text{rel}}^{\ast}\right)  ^{-1},
\]
which determines the relation $f$ in (\ref{Bmap}). We define
\[
\phi\left(  \xi,y\right)  =\psi_{\text{rel}}(\xi,y)-\frac{1}{2}\left(
y-\frac{1}{2}\right)  ^{2}%
\]
and reduce (\ref{Bmap}) to solve the equation
\begin{equation}
\alpha^{2}\frac{\partial^{2}\phi}{\partial\xi^{2}}+\frac{\partial^{2}\phi
}{\partial y^{2}}+1-f(\phi+\frac{1}{2}\left(  y-\frac{1}{2}\right)  ^{2})=0
\label{eqn-phi-traveling}%
\end{equation}
with the homogeneous boundary conditions
\[
\phi(\xi,0)=\phi(\xi,1)=0.
\]
For $\beta\in\left(  0,1\right)  ,\ $define the spaces%
\begin{align*}
B  &  = \bigg \{ \phi(\xi,y)\in C^{2,\beta}([0,2\pi]\times\lbrack0,1]),\\
&  \phi(\xi,0)=\phi(\xi,1)=0,\ 2\pi-\text{periodic and even in }\xi\bigg \}
\end{align*}
and
\[
D=\left\{  \phi(\xi,y)\in C^{0,\beta}([0,2\pi]\times\lbrack0,1]),\text{ }%
2\pi-\text{periodic and even in }\xi\right\}  .
\]
Consider the mapping
\[
F(\phi,\alpha^{2})\ :\ B\times\mathbb{R}^{+}\mapsto D
\]
defined by
\[
F(\phi,\alpha^{2})=\alpha^{2}\frac{\partial^{2}\phi}{\partial\xi^{2}}%
+\frac{\partial^{2}\phi}{\partial y}+1-f(\phi+\frac{1}{2}\left(  y-\frac{1}%
{2}\right)  ^{2}).
\]
Then the travelling wave solutions satisfy the equation $F(\phi,\alpha^{2}%
)=0$. The trivial solutions corresponding to the oscillatory shears (\ref{Os})
have
\[
\phi_{\ast}\left(  y\right)  =-\frac{A}{4n^{2}\pi}\cos\left(  4n\pi y\right)
+\frac{A}{4n^{2}\pi}.
\]
Let $-\alpha_{n}^{2}\ $be the negative eigenvalue of $-\frac{\partial^{2}%
}{\partial y}+Q(y)$ and $\phi_{n}(y)$ the corresponding positive
eigenfunction. The linearized operator of $F$ around$\ \left(  \phi_{\ast
},\alpha_{n}^{2}\right)  $ has the form
\begin{align*}
{\mathcal{L}}  &  :=F_{\psi}(\phi_{\ast},\alpha_{n}^{2})=\alpha_{n}^{2}%
\frac{\partial^{2}}{\partial\xi^{2}}+\frac{\partial^{2}}{\partial y}%
-f^{\prime}(\psi_{\text{rel}}^{\ast})\\
&  =\alpha_{n}^{2}\frac{\partial^{2}}{\partial\xi^{2}}+\frac{\partial^{2}%
}{\partial y}-Q(y).
\end{align*}
Then by Remark \ref{remark-inviscid-insta} 3), the kernel of ${\mathcal{L}}:$
$\ B\mapsto D\ $is given by
\[
\ker({\mathcal{L}})=\left\{  \phi_{n}(y)\cos\xi\right\}  ,
\]
In particular, the dimension of $\ker$(${\mathcal{L}}$) is 1. Since
${\mathcal{L}}$ is self-adjoint, $\phi_{n}(y)\cos\xi\not \in R({\mathcal{L}})$
-- the range of ${\mathcal{L}}$. In fact, again by Remark
\ref{remark-inviscid-insta} 3),
\[
\text{dim}\{B/R({\mathcal{L}})\}=1.
\]
Notice that $\partial_{\alpha^{2}}\partial_{\phi}F(\phi,\alpha^{2})$ is
continuous and
\[
\partial_{\alpha^{2}}\partial_{\psi}F(\phi_{\ast},\alpha_{n}^{2})\left(
\phi_{n}(y)\cos\xi\right)  =\frac{\partial^{2}}{\partial\xi^{2}}\left[
\phi_{n}(y)\cos\xi\right]  =-\phi_{n}(y)\cos\xi\not \in R({\mathcal{L}}).
\]
Therefore by the Crandall-Rabinowitz local bifurcation theorem \cite{CR71},
there is a local bifurcating curve ($\phi(\beta),\alpha^{2}(\beta)$) of
$F(\phi,\alpha^{2})=0$, which intersects the trivial curve ($\phi_{\ast}$,
$\alpha^{2}$) at $\alpha^{2}=\alpha_{n}^{2}$, such that
\[
\phi(\beta)=\phi_{\ast}(y)+\beta\phi_{n}(y)\cos\xi+o(\beta),
\]
and $\alpha^{2}(\beta)$ is a continuous function, $\alpha^{2}(0)=\alpha
_{n}^{2}$. So the relative stream function has the form
\begin{equation}
\psi_{\text{rel}}^{\alpha\left(  \beta\right)  }(\xi,y)=\frac{1}{2}\left(
y-\frac{1}{2}\right)  ^{2}+\phi_{\ast}(y)+\beta\phi_{n}(y)\cos\xi+o(\beta).
\label{cats-eye}%
\end{equation}
Since $\phi_{n}(y)>0$ in $\left(  0,1\right)  $, so near the inflection point
$y=1/2$ of the oscillatory shear (\ref{formula-U_n}), the streamlines of these
travelling waves have a cat's eye structure (see \cite{drazin-reid}) with
saddle points near $\left(  \frac{1}{2},2\pi j\right)  $ $\left(
j\in\mathbf{Z}\right)  $. The proof is complete.
\end{proof}

\begin{remark}
1) The small travelling waves constructed in Theorem \ref{thm-catseye} has a
$x-$period of the order $O\left(  \frac{1}{n}\right)  $. In particular, the
cat's eyes near the oscillatory shear $U_{n}\left(  y\right)  \ $has the
spatial scale $\frac{1}{n}$.

2) Since the oscillatory shears $U_{n}\left(  y\right)  $ (\ref{Os}) are
arbitrarily close to the linear shear in $L^{2}$ norm of velocity, the
travelling waves constructed in Theorem \ref{thm-catseye} appear in an
arbitrarily small ($L^{2}-$velocity) neighborhood of the linear shear. We note
that there might not exist nontrivial travelling waves near the linear shear
in a stronger norm in lieu of Romanov's nonlinear stability theorem
\cite{Rom73}. This conjecture is partly supported by the following rough
argument. Any travelling wave of 2D Euler equation satisfies the Poisson's
equation
\begin{equation}
-\Delta\psi=g\left(  \psi\right)  \label{eqn-Poisson}%
\end{equation}
for some function $g$ in $\Omega=\left(  0,L\right)  \times\left(  0,1\right)
$, where $L$ is the $x$-period and $\psi$ is the relative stream function.
Taking $x$ derivative of (\ref{eqn-Poisson}), we get
\[
-\Delta\psi_{x}=g^{\prime}\left(  \psi\right)  \psi_{x}.
\]
Note that $\psi_{x}=0$ on the boundaries $\left\{  y=0\right\}  $ and
$\left\{  y=1\right\}  $. Multiplying above by $\psi_{x}$ and integration by
parts in $\Omega$, we get
\[
\int\int_{\Omega}\left\vert \nabla\psi_{x}\right\vert ^{2}dxdy=\int
\int_{\Omega}g^{\prime}\left(  \psi\right)  \left\vert \psi_{x}\right\vert
^{2}dxdy.
\]
If the travelling wave is close to the linear shear in a strong norm (e.g.
$C^{1}$-vorticity), $g$ should be close to $1$ in $C^{1}$ norm and thus
$\left\vert g^{\prime}\left(  \psi\right)  \right\vert _{L^{\infty}}$ is very
small. We have
\[
\int\int_{\Omega}\left\vert \nabla\psi_{x}\right\vert ^{2}dxdy\leq\left\vert
g^{\prime}\left(  \psi\right)  \right\vert _{L^{\infty}}\int\int_{\Omega
}\left\vert \psi_{x}\right\vert ^{2}dxdy.
\]
But we also have
\[
\int\int_{\Omega}\left\vert \nabla\psi_{x}\right\vert ^{2}dxdy=\left(
-\Delta\psi_{x},\psi_{x}\right)  \geq\pi^{2}\int\int_{\Omega}\left\vert
\psi_{x}\right\vert ^{2}dxdy.\text{\ }%
\]
Since $\psi_{x}=0$ on $\partial\Omega$ and the operator $-\Delta$ with
Dirichlet boundary conditions on $\partial\Omega$ has the lowest eigenvalue
$\pi^{2}$. So if $\left\vert g^{\prime}\left(  \psi\right)  \right\vert
_{L^{\infty}}<\pi^{2}$, we must have $\psi_{x}\equiv0$ in $\Omega\ $and the
travelling wave is a trivial shear flow.


\end{remark}

\begin{remark}
\label{rmk-TW-critical}After this paper, the study of Euler traveling waves
near Couette was extended in \cite{lin-zeng} to show that (vorticity)
$H^{\frac{3}{2}}$ is the critical regularity for the existence of nontrivial
traveling waves near Couette. More precisely, it is shown in \cite{lin-zeng}
that there exist cats's eyes flows in any (vorticity) $H^{s}$ $\left(
s<\frac{3}{2}\right)  \ $neighborhood of Couette with arbitrary minimal
$x-$period, and no nontrivial traveling waves exist in a sufficiently small
(vorticity) $H^{s}$ $\left(  s>\frac{3}{2}\right)  \ $neighborhood of Couette.
These results shed some light on another puzzle about Couette flow, namely the
nonlinear inviscid damping, for which the linear damping was first discovered
by Orr (\cite{orr}) in 1907.
\end{remark}

\section{Inviscid Linear Instability of 3D Shears}

In this section, we show that the instability of oscillatory shears
$U_{n}\left(  y\right)  $ (\ref{formula-U_n}) persists under the 3D setting.
Consider a 3D shear flow $\vec{u}_{0}=\left(  U\left(  y,z\right)
,0,0\right)  $, which is a steady solution of 3D Euler equation for any
profile $U\left(  y,z\right)  $. The fluid domain is
\[
\Omega_{3}=\left\{  \left(  x,y,z\right)  \ |\ 0<y<1\text{, }L_{x}\text{ and
}L_{z}\ \text{periodic in }x\text{ and }z\right\}  ,\text{ }%
\]
where $L_{x}$ and $L_{z}$ are to be determined later. The linearized 3D Euler
equations near $\vec{u}_{0}$ are
\begin{equation}
\partial_{t}u+Uu_{x}+vU_{y}+wU_{z}=-P_{x}, \label{eqn-u-3d-shear}%
\end{equation}%
\begin{equation}
\partial_{t}v+Uv_{x}=-P_{y},\ \partial_{t}w+Uw_{x}=-P_{z},
\label{eqn-v,w-3d-shear}%
\end{equation}%
\begin{equation}
u_{x}+v_{y}+w_{z}=0, \label{eqn-div-3d-shear}%
\end{equation}
with the boundary conditions
\begin{equation}
v\left(  x,0,z\right)  =v\left(  x,1,z\right)  =0\text{.}
\label{eqn-bc-3d-shear}%
\end{equation}
Here $\left(  u,v,w\right)  $ and $P$ are perturbations of the velocity and
pressure. Consider a normal mode solution $e^{i\alpha\left(  x-ct\right)
}\left(  u,v,w\right)  \left(  y,z\right)  $ to the linearized equation, with
$\alpha=k\frac{2\pi}{L_{x}}$ $\left(  k=1,2,\cdots\right)  $. By eliminating
$u$ and $P$, we end up with a PDE system
\begin{equation}
\left(  U-c\right)  \left(  v_{yy}-\alpha^{2}v+w_{yz}\right)  -U_{yy}%
v-U_{yz}w-U_{z}w_{y}+U_{y}w_{z}=0, \label{Ray-3d-1}%
\end{equation}%
\begin{equation}
\left(  U-c\right)  \left(  w_{zz}-\alpha^{2}w+v_{yz}\right)  -U_{zz}%
w-U_{yz}v-U_{y}v_{z}+U_{z}v_{y}=0, \label{Ray-3d-2}%
\end{equation}
with the boundary conditions $v\left(  0,z\right)  =v\left(  1,z\right)  =0$.
When $U$ depends only on $y$ and $w=0$, the system (\ref{Ray-3d-1}%
)-(\ref{Ray-3d-2}) is reduced to the Rayleigh equation (\ref{Rayleigh}) for 2D
shears. So far, the understanding of the instability of such 3D shears remains
very limited due to the complicated nature of (\ref{Ray-3d-1})-(\ref{Ray-3d-2}%
). Our next theorem shows instability of 3D shears close to an unstable 2D shear.

\begin{theorem}
\label{tm-robust}Let $U_{0}\left(  y\right)  \in C^{2}\left(  0,1\right)  $ be
such that the Rayleigh equation (\ref{Rayleigh}) has an unstable solution with
$\left(  \alpha_{0},c_{0}\right)  $ $\left(  \alpha_{0},\operatorname{Im}%
c_{0}>0\right)  $. Fixed $L_{z}>0$, consider $U\left(  y,z\right)  \in
C^{1}\left(  \left(  0,1\right)  \times\left(  0,L_{z}\right)  \right)  $,
$L_{z}$-periodic in $z$ and
\begin{equation}
U\left(  1,z\right)  =U_{0}\left(  1\right)  ,U\left(  0,z\right)
=U_{0}\left(  0\right)  . \label{bc-3d-shear}%
\end{equation}
If $\left\Vert U\left(  y,z\right)  -U_{0}\left(  y\right)  \right\Vert
_{W^{1,p}\left(  \left(  0,1\right)  \times\left(  0,L_{z}\right)  \right)  }$
$\left(  p>2\right)  \ $is small enough, then there exists an unstable
solution $e^{i\alpha_{0}\left(  x-ct\right)  }\left(  u,v,w,P\right)  \left(
y,z\right)  $ to the linearized equation around $\left(  U\left(  y,z\right)
,0,0\right)  $ with $\left\vert c-c_{0}\right\vert $ small. Moreover, if
$U\left(  y,z\right)  \in C^{\infty}$, then $\left(  u,v,w,P\right)  \in
C^{\infty}$.
\end{theorem}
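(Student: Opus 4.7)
The plan is to reformulate the 3D eigenvalue problem as an analytic Fredholm operator pencil in $c$, and to apply the stability of characteristic values of such pencils under continuous perturbation of the coefficient $U$ in $W^{1,p}$. First I would reduce the pair (\ref{Ray-3d-1})--(\ref{Ray-3d-2}) to a single scalar PDE for the pressure. Using $P_y=-i\alpha(U-c)v$, $P_z=-i\alpha(U-c)w$, $i\alpha u+v_y+w_z=0$ together with the $x$-momentum equation, a direct computation shows that, in the regime $\operatorname{Im}c>0$ where $U-c$ never vanishes, the system is equivalent to
\begin{equation*}
\Delta_\perp P - \alpha^2 P \;=\; \frac{2\,\nabla_\perp U\cdot\nabla_\perp P}{U-c}\quad \text{in }\Omega_\perp:=(0,1)\times\mathbb{T}_{L_z},\qquad P_y|_{y=0,1}=0,
\end{equation*}
where $\nabla_\perp=(\partial_y,\partial_z)$, $\Delta_\perp=\partial_y^2+\partial_z^2$, and the Neumann boundary conditions come from $v|_{y=0,1}=0$. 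The point of this reformulation is that it involves only first derivatives of $U$, exactly matching the $W^{1,p}$ hypothesis and bypassing the nominal $U_{yy}, U_{yz}, U_{zz}$ terms in (\ref{Ray-3d-1})--(\ref{Ray-3d-2}).

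Next I would recast the problem as a compact operator pencil. Since $\alpha>0$, the Neumann operator $\alpha^2-\Delta_\perp$ is boundedly invertible on $H^1(\Omega_\perp)$, and setting
\begin{equation*}
K_U(c)\;:=\;-(\alpha^2-\Delta_\perp)_N^{-1}\!\left[\,\frac{2\,\nabla_\perp U\cdot\nabla_\perp(\cdot)}{U-c}\,\right]
\end{equation*}
the eigenvalue problem becomes $(I-K_U(c))P=0$. For $p>2$, Sobolev embedding on the 2D cross section yields $U\in C^\beta$ and $\nabla_\perp U\in L^p$; the product $\nabla_\perp U\cdot\nabla_\perp P$ lies in $L^r$ with $1/r=1/p+1/2$; elliptic regularity returns $W^{2,r}$; and the Rellich embedding $W^{2,r}\hookrightarrow H^1$ is compact precisely because $p>2$. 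Hence $K_U(c)$ is compact on $H^1(\Omega_\perp)$, analytic in $c$ on $\{\operatorname{Im}c>0\}$, and continuous in $U$ in the $W^{1,p}$ norm.

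At $U=U_0$ the $z$-independent reduction of the pressure PDE is just a disguised form of the 2D Rayleigh equation (\ref{Rayleigh}), so the unstable mode $(v_0,c_0)$ supplies a nontrivial $P_0(y)$ with $(I-K_{U_0}(c_0))P_0=0$. By the analytic Fredholm theorem the characteristic values of $c\mapsto I-K_{U_0}(c)$ are isolated; enclose $c_0$ by a small circle $\gamma\subset\{\operatorname{Im}c>0\}$ containing no other characteristic value. The total algebraic multiplicity inside $\gamma$, expressible as a Gohberg--Sigal trace or the winding number of a Fredholm determinant, is an integer-valued functional depending continuously on $U$ in operator norm and is therefore locally constant. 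Thus for $\|U-U_0\|_{W^{1,p}}$ small enough there is a characteristic value $c^*\in\gamma^\circ$ with $|c^*-c_0|$ as small as desired, and a nonzero eigenfunction $P^*\in H^1$; setting $v=iP^*_y/[\alpha(U-c^*)]$, $w=iP^*_z/[\alpha(U-c^*)]$, and $u=(i/\alpha)(v_y+w_z)$ recovers the required unstable solution, with $v|_{y=0,1}=0$ coming from the Neumann condition on $P^*$. When $U\in C^\infty$, elliptic bootstrap on the pressure PDE yields $P^*\in C^\infty$ and hence $(u,v,w,P)\in C^\infty$.

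The hardest part is verifying the operator-theoretic set-up at the critical regularity $W^{1,p}$, $p>2$: handling the absence of classical second derivatives of $U$ via the pressure reformulation, establishing the compactness of $K_U(c)$ (which fails at $p=2$), and ensuring operator-norm continuity of $K_U(c)$ in $U$. The hypothesis $\operatorname{Im}c_0>0$ is also indispensable, since it keeps $U-c$ bounded away from zero uniformly on $\Omega_\perp$ for $c$ near $c_0$ and thereby prevents a critical-layer singularity from obstructing the analytic Fredholm framework.
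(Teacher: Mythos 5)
Your proposal is correct in substance, but it follows a genuinely different route from the paper. You eliminate the velocity entirely and study a scalar pressure pencil $(I-K_U(c))P=0$ with Neumann conditions, where $K_U(c)=-(\alpha_0^2-\Delta_\perp)_N^{-1}\bigl[\tfrac{2\nabla_\perp U\cdot\nabla_\perp(\cdot)}{U-c}\bigr]$; the paper instead recasts the growing-mode problem as an eigenvalue problem $\lambda(u,\omega)^{T}=(\mathcal{F}+\mathcal{K})(u,\omega)^{T}$ for the streamwise velocity and cross-stream vorticity, reconstructs $(v,w)$ through a div--curl lemma with zero-mean and zero-circulation compatibility conditions, and applies Steinberg's theorem on meromorphic families of compact operators to the homotopy $(1-s)\mathcal{M}(\lambda,U_0)+s\mathcal{M}(\lambda,U)$ with $\mathcal{M}=(\lambda-\mathcal{F})^{-1}\mathcal{K}$. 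The analytic ingredients are the same in both arguments: compactness extracted from the hypothesis $p>2$ (your $L^p\cdot L^2\subset L^r$, elliptic $W^{2,r}$ regularity and Rellich step parallels the paper's div--curl estimate and the bound by $\|\nabla(U-V)\|_{L^p}\|(v,w)\|_{L^{2p/(p-2)}}$), analyticity in the half-plane where $U-c$ is bounded away from zero, operator-norm continuity in $U$ measured by $\|U-V\|_{W^{1,p}}$, and persistence of isolated characteristic values, with your Gohberg--Sigal operator Rouch\'e step playing exactly the role of Steinberg's lemma. What your route buys is a leaner functional setting: no div--curl reconstruction, no projector $\mathcal{Q}$ or circulation conditions, only first derivatives of $U$ appear from the start, and the passage from a characteristic value back to a growing mode is immediate since $(v,w)$ come directly from $\nabla_\perp P/(U-c)$. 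What the paper's $(u,\omega)$ formulation buys is a realization of the linearized operator as a compact perturbation of a bounded skew-adjoint multiplication operator, which in addition yields the essential spectrum (Lemma \ref{lemma-property-A}) and stays closer to the Friedlander--Strauss--Vishik framework. A few details you should still record: before invoking the analytic Fredholm theorem, observe that $\|K_{U_0}(c)\|\to0$ as $\operatorname{Im}c\to+\infty$, so the pencil is invertible somewhere and its characteristic values are isolated; verify that the unstable Rayleigh mode produces a nontrivial $P_0$ (if $P_0\equiv0$ then $P_{0,y}\equiv0$ forces $\phi\equiv0$ because $U_0-c_0\neq0$); since $K_U(c)$ need not be trace class, phrase the multiplicity count via the operator-valued Rouch\'e theorem rather than a literal Fredholm determinant; and when reconstructing $u=i(v_y+w_z)/\alpha_0$, state the (weak) regularity in which the $x$-momentum equation holds, the analogue of the paper's verification that an $\mathcal{A}$-eigenfunction solves the original system (\ref{eqn-growing-u})--(\ref{eqn-growing-bc}).
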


The proof of Theorem \ref{tm-robust} is divided into several steps. First, we
give a new formulation of linearized growing modes for 3D shears. Fixed
$\alpha=\alpha_{0}.$ Consider a growing mode solution $e^{i\alpha_{0}\left(
x-ct\right)  }\left(  u,v,w,P\right)  \left(  y,z\right)  $ $\left(
\operatorname{Im}c>0\right)  $ to the linearized Euler equation around
$\left(  U\left(  y,z\right)  ,0,0\right)  $. Instead of studying
(\ref{Ray-3d-1})-(\ref{Ray-3d-2}), we reformulate the problem in the following
way. From (\ref{eqn-u-3d-shear})-(\ref{eqn-bc-3d-shear}), we have%
\begin{equation}
i\alpha_{0}\left(  U-c\right)  u+vU_{y}+wU_{z}=-i\alpha_{0}P,
\label{eqn-growing-u}%
\end{equation}%
\begin{equation}
i\alpha_{0}\left(  U-c\right)  v=-P_{y},\ i\alpha_{0}\left(  U-c\right)
w=-P_{z}, \label{eqn-growing-v,w}%
\end{equation}%
\begin{equation}
i\alpha_{0}u+v_{y}+w_{z}=0, \label{eqn-growing-div}%
\end{equation}
with the boundary conditions
\begin{equation}
v\left(  0,z\right)  =v\left(  1,z\right)  =0\text{.} \label{eqn-growing-bc}%
\end{equation}
Denote $\Omega_{2}=\left\{  \left(  y,z\right)  \ |\ 0<y<1\text{, }%
L_{z}\ \text{periodic in }z\right\}  $,
\[
\partial\Omega_{2}=\left\{  y=0\right\}  \cup\left\{  y=1\right\}  ,
\]%
\[
\nabla_{2}=\left(  \partial_{y},\partial_{z}\right)  ,\ \Delta_{2}%
=\partial_{yy}+\partial_{zz},\
\]
and $\vec{u}_{2}=\left(  v,w\right)  $. We claim that:%
\begin{equation}
\int\int_{\Omega_{2}}u\ dydz=0, \label{formula-0-integral-u}%
\end{equation}
and
\begin{equation}
\oint_{\left\{  y=0\right\}  }\vec{u}_{2}\cdot dl=\oint_{\left\{  y=1\right\}
}\vec{u}_{2}\cdot dl=0. \label{formula-0-circulation}%
\end{equation}
The identity (\ref{formula-0-integral-u}) follows by integrating
(\ref{eqn-growing-div}) in $\Omega_{2}$ with the boundary condition
(\ref{bc-3d-shear}). The identity (\ref{formula-0-circulation}) is a result of
(\ref{eqn-growing-v,w}), (\ref{bc-3d-shear}) and the assumption that
$\operatorname{Im}c>0$. Define
\begin{equation}
\omega=w_{y}-v_{z}. \label{defn-vorticity}%
\end{equation}
Then by equation (\ref{eqn-growing-v,w}),
\begin{equation}
\left(  U-c\right)  \omega+U_{y}w-U_{z}v=0. \label{eqn-growing-vorticity}%
\end{equation}
Taking $\left(  y,z\right)  $ divergence of (\ref{eqn-growing-v,w}) and using
(\ref{eqn-growing-div}), we get
\begin{equation}
-\Delta_{2}P=\alpha_{0}^{2}\left(  U-c\right)  u+i\alpha_{0}\left(
U_{y}v+U_{z}w\right)  . \label{eqn-P-poisson}%
\end{equation}
From (\ref{eqn-growing-bc}) and (\ref{eqn-growing-v,w}), $P$ satisfies the
Neumann boundary conditions
\[
P_{y}\left(  0,z\right)  =P_{y}\left(  1,z\right)  =0
\]
on $\partial\Omega_{2}$. Denote
\[
L^{2;0}\left(  \Omega_{2}\right)  =\left\{  f\in L^{2}\left(  \Omega
_{2}\right)  \ |\ \int\int_{\Omega_{2}}f\ dydz=0\right\}  ,
\]%
\[
H^{2;0}\left(  \Omega_{2}\right)  =\left\{  f\in H^{2}\left(  \Omega
_{2}\right)  \ |\ \int\int_{\Omega_{2}}f\ dydz=0\right\}
\]
and $\mathcal{Q}:L^{2}\rightarrow L^{2;0}$ to be the projector to the mean
zero space $L^{2;0}$. For any $f\in L^{2;0}$, denote $h=\left(  -\Delta
_{2}\right)  _{N}^{-1}f$ \ to be the unique solution in $H^{2;0}\left(
\Omega_{2}\right)  $ of the Neumann problem%
\[
-\Delta_{2}h=f,\ \ \ \text{in }\Omega_{2}%
\]%
\[
\frac{\partial h}{\partial n}=0\text{ on }\partial\Omega_{2}.
\]
Thus from (\ref{eqn-P-poisson}), we get
\begin{equation}
\mathcal{Q}P=\left(  -\Delta_{2}\right)  _{N}^{-1}\left[  \alpha_{0}%
^{2}\left(  U-c\right)  u+i\alpha_{0}\left(  U_{y}v+U_{z}w\right)  \right]  .
\label{formula1-P}%
\end{equation}
Denote $\mathcal{B}=\left(  -\Delta_{2}\right)  _{N}^{-1}\mathcal{Q=Q}\left(
-\Delta_{2}\right)  _{N}^{-1}\mathcal{Q}$, then $\mathcal{B}:L^{2}\left(
\Omega_{2}\right)  \rightarrow H^{2;0}\left(  \Omega_{2}\right)  $ is a
self-adjoint, bounded and nonnegative operator. We rewrite (\ref{formula1-P})
as
\begin{equation}
\mathcal{Q}P=\alpha_{0}^{2}\mathcal{B}\left(  Uu\right)  -c\alpha_{0}%
^{2}\mathcal{B}u+i\alpha_{0}\mathcal{B}\left(  U_{y}v+U_{z}w\right)  .
\label{formula2-P}%
\end{equation}
Multiplying (\ref{eqn-growing-u}) by $\mathcal{Q}$ and using the equation
(\ref{formula2-P}), we get
\begin{align}
&  -i\alpha_{0}c\left(  1+\alpha_{0}^{2}\mathcal{B}\right)  u\label{eqn-u}\\
&  =-i\alpha_{0}\left(  \alpha_{0}^{2}\mathcal{B}+\mathcal{Q}\right)  \left(
Uu\right)  +\left(  \alpha_{0}^{2}\mathcal{B}-\mathcal{Q}\right)  \left(
U_{y}v+U_{z}w\ \right)  ,\ \ \ \ \ \ \nonumber
\end{align}
where we use the property $\mathcal{Q}u=u$ due to (\ref{formula-0-integral-u}%
). Let $\lambda=-i\alpha_{0}c,$ then $\operatorname{Re}\lambda>0$. From
(\ref{eqn-u}) and (\ref{eqn-growing-vorticity}), we get
\begin{align}
&  \lambda\left(
\begin{array}
[c]{c}%
u\\
\omega
\end{array}
\right)  =-i\alpha_{0}U\left(
\begin{array}
[c]{c}%
u\\
\omega
\end{array}
\right) \label{reformulation-growing}\\
&  +\left(
\begin{array}
[c]{c}%
\left(  1+\alpha_{0}^{2}\mathcal{B}\right)  ^{-1}\left[  i\alpha_{0}\left(
1-\mathcal{Q}\right)  \left(  Uu\right)  +\left(  \alpha_{0}^{2}%
\mathcal{B}-\mathcal{Q}\right)  \left(  U_{y}v+U_{z}w\ \right)  \right] \\
-i\alpha_{0}\left(  U_{y}w-U_{z}v\right)
\end{array}
\right) \nonumber\\
&  =-i\alpha_{0}U\left(
\begin{array}
[c]{c}%
u\\
\omega
\end{array}
\right)  +\nonumber\\
&  \left(
\begin{array}
[c]{c}%
\frac{i\alpha_{0}}{L_{z}}\int\int_{\Omega_{2}}Uu\ dydz+\left(  1+\alpha
_{0}^{2}\mathcal{B}\right)  ^{-1}\left(  \alpha_{0}^{2}\mathcal{B}%
-\mathcal{Q}\right)  \left(  U_{y}v+U_{z}w\ \right) \\
-i\alpha_{0}\left(  U_{y}w-U_{z}v\right)
\end{array}
\right)  ,\nonumber
\end{align}
since
\begin{align*}
&  \left(  1+\alpha_{0}^{2}\mathcal{B}\right)  ^{-1}\left(  1-\mathcal{Q}%
\right)  \left(  Uu\right)  =\\
&  \left(  \frac{1}{L_{z}}\int\int_{\Omega_{2}}Uu\ dydz\right)  \ \left(
1+\alpha_{0}^{2}\mathcal{B}\right)  ^{-1}1=\frac{1}{L_{z}}\int\int_{\Omega
_{2}}Uu\ dydz.
\end{align*}
Thus the growing mode problem is reduced to study the unstable spectrum of the
operator $\mathcal{A}=\mathcal{F}+\mathcal{K},\ $where $\mathcal{F}$ is the
$-i\alpha_{0}U\ $multiplying operator and
\[
\mathcal{K}\left(
\begin{array}
[c]{c}%
u\\
\omega
\end{array}
\right)  :=\left(
\begin{array}
[c]{c}%
\frac{i\alpha_{0}}{L_{z}}\int\int_{\Omega_{2}}Uu\ dydz+\left(  1+\alpha
_{0}^{2}\mathcal{B}\right)  ^{-1}\left(  \alpha_{0}^{2}\mathcal{B}%
-\mathcal{Q}\right)  \left(  U_{y}v+U_{z}w\ \right) \\
-i\alpha_{0}\left(  U_{y}w-U_{z}v\right)
\end{array}
\right)  .
\]
In the above definition, $\left(  w,v\right)  \in\left(  H^{1}\left(
\Omega_{2}\right)  \right)  ^{2}$ is uniquely determined from $\left(
u,\omega\right)  \in\left(  L^{2}\left(  \Omega_{2}\right)  \right)  ^{2}$ by
solving equations%
\begin{equation}
v_{y}+w_{z}=-i\alpha_{0}\mathcal{Q}u, \label{eqn-Q-div}%
\end{equation}%
\[
w_{y}-v_{z}=\omega,
\]
with the zero circulation condition (\ref{formula-0-circulation}) and the zero
normal velocity condition $v=0$ on $\partial\Omega_{2}.$This is guaranteed by
Lemma \ref{lemma-div-curl} in Appendix.

We study properties of $\mathcal{A}$ in the next lemma.

\begin{lemma}
\label{lemma-property-A}(i) $\mathcal{A}:\left(  L^{2}\left(  \Omega
_{2}\right)  \right)  ^{2}\rightarrow\left(  L^{2}\left(  \Omega_{2}\right)
\right)  ^{2}$ is compact perturbation of $\mathcal{F}.$

(ii) The essential spectrum of $\mathcal{A}$ is $i\left[  \alpha_{0}\min
U,\alpha_{0}\min U\right]  .$
\end{lemma}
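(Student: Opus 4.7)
The plan is to prove part (i) by decomposing $\mathcal{K}:=\mathcal{A}-\mathcal{F}$ into three pieces that are each seen to be compact on $(L^2(\Omega_2))^2$, and then to deduce (ii) from (i) via Weyl's theorem on the stability of the essential spectrum under compact perturbations. I note in passing that the statement of (ii) evidently contains a typographical misprint (repeating ``$\min U$'' in place of ``$\max U$''); the correct value is the segment on the imaginary axis from $-i\alpha_0\min U$ to $-i\alpha_0\max U$.

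For (i), I would write
\[
\mathcal{K}\begin{pmatrix}u\\ \omega\end{pmatrix}=\mathcal{K}_1(u,\omega)+\mathcal{K}_2(u,\omega)+\mathcal{K}_3(u,\omega),
\]
where $\mathcal{K}_1(u,\omega)=\bigl(\tfrac{i\alpha_0}{L_z}\int\!\!\int_{\Omega_2}Uu\,dydz,\,0\bigr)$ has one-dimensional range and is therefore finite-rank, $\mathcal{K}_2(u,\omega)=\bigl(0,\,-i\alpha_0(U_yw-U_zv)\bigr)$, and $\mathcal{K}_3(u,\omega)=\bigl((1+\alpha_0^2\mathcal{B})^{-1}(\alpha_0^2\mathcal{B}-\mathcal{Q})(U_yv+U_zw),\,0\bigr)$. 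The crucial step, used for both $\mathcal{K}_2$ and $\mathcal{K}_3$, is that Lemma \ref{lemma-div-curl} produces $(v,w)\in(H^1(\Omega_2))^2$ from $(u,\omega)\in(L^2(\Omega_2))^2$ with continuous dependence. Since $\Omega_2$ is two-dimensional, Rellich--Kondrachov embeds $(H^1(\Omega_2))^2$ compactly into $(L^r(\Omega_2))^2$ for every finite $r$. Under the hypothesis $U\in W^{1,p}$ with $p>2$, choosing $r=2p/(p-2)<\infty$ and applying H\"older shows that multiplication by $U_y$ or $U_z$ maps a bounded family in $L^r$ continuously into $L^2$; combining this with the Rellich precompactness makes $(u,\omega)\mapsto(U_yv+U_zw,\,U_yw-U_zv)$ compact from $(L^2)^2$ to $L^2\times L^2$, which immediately handles $\mathcal{K}_2$. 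Post-composing with the bounded operator $(1+\alpha_0^2\mathcal{B})^{-1}(\alpha_0^2\mathcal{B}-\mathcal{Q})$ on $L^2$ preserves compactness and handles $\mathcal{K}_3$.

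For (ii), Weyl's theorem gives $\sigma_{\mathrm{ess}}(\mathcal{A})=\sigma_{\mathrm{ess}}(\mathcal{F})$. The operator $\mathcal{F}$ is the diagonal multiplication by the bounded continuous symbol $-i\alpha_0 U$ on $(L^2(\Omega_2))^2$, so its spectrum equals the essential range of its symbol, i.e.\ the segment
\[
\bigl\{-i\alpha_0 t:\ t\in[\min_{\overline{\Omega}_2}U,\max_{\overline{\Omega}_2}U]\bigr\}
\]
on the imaginary axis. A multiplication operator by a non-constant continuous symbol on $L^2$ admits no eigenvalues of finite multiplicity (the level sets of $U$ have positive measure only if they fill an entire open set, in which case any such eigenvalue has infinite multiplicity), so the full segment is already essential spectrum, concluding (ii).

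The main obstacle is the clean invocation of the div--curl Lemma \ref{lemma-div-curl}: one must verify that the system $v_y+w_z=-i\alpha_0\mathcal{Q}u,\ w_y-v_z=\omega$ on the partially periodic strip $\Omega_2$, together with the normal-velocity boundary datum $v\vert_{\partial\Omega_2}=0$ and the two circulation conditions (\ref{formula-0-circulation}), is uniquely solvable in $(H^1(\Omega_2))^2$ with the elliptic estimate $\|(v,w)\|_{H^1}\lesssim\|(u,\omega)\|_{L^2}$. The mean-zero compatibility (\ref{formula-0-integral-u}) is exactly what makes the divergence datum $-i\alpha_0\mathcal{Q}u$ consistent with the no-flux boundary condition. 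The hypothesis $p>2$ is sharp for the final H\"older step to leave the exponent $r$ finite and thereby preserve the Rellich gain through the multiplication by $U_y,U_z$.
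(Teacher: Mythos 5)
Your proposal is correct and follows essentially the same route as the paper: Lemma \ref{lemma-div-curl} gives the $H^1$ bound on $(v,w)$ in terms of $\|(u,\omega)\|_{L^2}$, compactness of the embedding then makes $\mathcal{K}$ compact (the paper leaves the H\"older/Rellich step handling multiplication by $U_y,U_z\in L^p$, $p>2$, implicit, while you spell it out), the auxiliary operators $(1+\alpha_0^2\mathcal{B})^{-1},\mathcal{B},\mathcal{Q}$ are bounded, and (ii) follows from Weyl's theorem since $\mathcal{F}$ is bounded and skew-adjoint. You also correctly note that the repeated $\min U$ in (ii) is a misprint for the segment $i[\alpha_0\min U,\alpha_0\max U]$, a typo the paper's own proof repeats.
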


\begin{proof}
(ii) is a corollary of (i) because of Weyl's theorem (see \cite{hislop-siegel}
or \cite{kato-book}) and the fact that the operator $\mathcal{F}\ $is bounded,
skew-adjoint and has the essential spectrum $i\left[  \alpha_{0}\min
U,\alpha_{0}\min U\right]  $. To show (i), we need to prove that
$\mathcal{K}:\left(  L^{2}\left(  \Omega_{2}\right)  \right)  ^{2}%
\rightarrow\left(  L^{2}\left(  \Omega_{2}\right)  \right)  ^{2}$ is compact.
By Lemma \ref{lemma-div-curl}, we have
\begin{align*}
\left\Vert \left(
\begin{array}
[c]{c}%
v\\
w
\end{array}
\right)  \right\Vert _{H^{1}\left(  \Omega_{2}\right)  }  &  =\left\Vert
\vec{u}_{2}\ \right\Vert _{H^{1}\left(  \Omega_{2}\right)  }\leq C_{0}\left(
\left\Vert \operatorname{div}\vec{u}_{2}\ \right\Vert _{L^{2}}+\left\Vert
\operatorname{curl}\vec{u}_{2}\ \right\Vert _{L^{2}}\right) \\
&  \leq C_{0}\left(  \alpha_{0}\left\Vert u\ \right\Vert _{L^{2}}+\left\Vert
\ \omega\right\Vert _{L^{2}}\right)  .
\end{align*}
Thus the linear mapping $\left(  u,\omega\right)  \rightarrow\left(
v,w\right)  $ is compact in $\left(  L^{2}\left(  \Omega_{2}\right)  \right)
^{2}$. Since
\[
\left(  1+\alpha_{0}^{2}\mathcal{B}\right)  ^{-1},\mathcal{B},\ \mathcal{Q}%
\]
are bounded, this proves the compactness of $\mathcal{K}.$
\end{proof}

As a corollary of the above lemma, any eigenvalue $\lambda$ of $\mathcal{A}%
\ $with $\operatorname{Re}\lambda>0$ is a discrete eigenvalue with finite multiplicity.

By using the formulation (\ref{reformulation-growing}), the proof of
persistence of instability for 3D shears is similar to that in
\cite{friedlander-et-robust} for 2D Euler equation. Given $\lambda$ with
$\operatorname{Re}\lambda>0$ and $U\left(  y,z\right)  \in C^{1}\left(
\left(  10,1\right)  \times\left(  0,L_{z}\right)  \right)  $, we define the
operator
\[
\mathcal{M}\left(  \lambda,U\right)  :=\left(  \lambda-\mathcal{F}\right)
^{-1}\mathcal{K}%
\]
in $\left(  L^{2}\left(  \Omega_{2}\right)  \right)  ^{2}$. Then the growing
mode equation is reduced to solve $\mathcal{M}\left(
\begin{array}
[c]{c}%
u\\
\omega
\end{array}
\right)  =\left(
\begin{array}
[c]{c}%
u\\
\omega
\end{array}
\right)  $ for some $\lambda$ with $\operatorname{Re}\lambda>0$.

\begin{lemma}
\label{lemma-property-M}Consider $\lambda\in\mathbf{C}^{+}=\left\{
\operatorname{Re}\lambda>0\right\}  $ and $U\left(  y,z\right)  $ satisfying
conditions in Theorem \ref{tm-robust}. Then (i) $\mathcal{M}\left(
\lambda,U\right)  :\left(  L^{2}\left(  \Omega_{2}\right)  \right)
^{2}\rightarrow\left(  L^{2}\left(  \Omega_{2}\right)  \right)  ^{2}$ is
compact and analytical in $\lambda\in\mathbf{C}^{+}$. (ii) $\mathcal{M}\left(
\lambda,U\right)  \ $depends continuously on $U$ in the following sense. Let
$V\left(  y,z\right)  $ satisfy the same conditions of $U\left(  y,z\right)  $
as in Theorem \ref{tm-robust} . Then for any $b>0$, there exists another
constant $C^{\prime}>0$ such that
\begin{equation}
\sup_{\operatorname{Re}\lambda\geq b}\left\Vert \mathcal{M}\left(
\lambda,U\right)  -\mathcal{M}\left(  \lambda,V\right)  \right\Vert
_{\mathcal{L}\left(  \left(  L^{2}\left(  \Omega_{2}\right)  \right)
^{2}\right)  }\leq C^{\prime}\left\Vert U-V\right\Vert _{W^{1,p}}.
\label{norm-continuity}%
\end{equation}

\end{lemma}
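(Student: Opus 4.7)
The plan for (i) is immediate: write $\mathcal{M}(\lambda,U) = (\lambda-\mathcal{F})^{-1}\mathcal{K}$ and note that $\mathcal{F}$ acts as multiplication by the imaginary-valued bounded function $-i\alpha_0 U$, so it is a bounded skew-adjoint operator on $(L^2(\Omega_2))^2$ with spectrum contained in $i\mathbf{R}$. For every $\lambda\in\mathbf{C}^+$ the resolvent $(\lambda-\mathcal{F})^{-1}$ therefore exists with $\|(\lambda-\mathcal{F})^{-1}\|\leq 1/\operatorname{Re}\lambda$ and is operator-analytic in $\lambda$ on the resolvent set. Composing with the compact operator $\mathcal{K}$ supplied by Lemma \ref{lemma-property-A} yields a compact operator that inherits analyticity in $\lambda\in\mathbf{C}^+$.

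For (ii), the natural decomposition is the mixed first-resolvent identity
\begin{equation*}
\mathcal{M}(\lambda,U)-\mathcal{M}(\lambda,V) = (\lambda-\mathcal{F}_U)^{-1}(\mathcal{K}_U-\mathcal{K}_V) + (\lambda-\mathcal{F}_U)^{-1}(\mathcal{F}_U-\mathcal{F}_V)(\lambda-\mathcal{F}_V)^{-1}\mathcal{K}_V,
\end{equation*}
where the subscript records the explicit $U$-dependence; crucially, the auxiliary linear map $(u,\omega)\mapsto(v,w)$ furnished by Lemma \ref{lemma-div-curl} is itself independent of the profile. Both resolvents are uniformly bounded by $1/b$ on $\{\operatorname{Re}\lambda\geq b\}$, and $\mathcal{F}_U-\mathcal{F}_V$ is multiplication by $-i\alpha_0(U-V)$, whose operator norm is $\alpha_0\|U-V\|_{L^\infty}\leq C\|U-V\|_{W^{1,p}}$ by the two-dimensional Sobolev embedding $W^{1,p}\hookrightarrow L^\infty$ (valid for $p>2$). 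Since $\|\mathcal{K}_V\|$ stays bounded once $V$ is confined to a fixed $W^{1,p}$-ball around $U_0$, the second term already delivers an estimate of the advertised form.

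The main obstacle is the operator-norm bound on $\mathcal{K}_U-\mathcal{K}_V$ in $(L^2(\Omega_2))^2$. Writing out the difference and using that $\mathcal{Q}$, $\mathcal{B}$ and $(1+\alpha_0^2\mathcal{B})^{-1}$ are uniformly bounded on $L^2$, the task reduces to controlling the three multiplicative pieces $(U-V)u$, $(U-V)_y v + (U-V)_z w$, and $(U-V)_y w - (U-V)_z v$ in $L^2(\Omega_2)$. The first is immediate via $\|U-V\|_{L^\infty}\|u\|_{L^2}$. For the derivative pieces, H\"older with $\tfrac1p+\tfrac1q=\tfrac12$ gives $\|(U-V)_y v\|_{L^2}\leq\|(U-V)_y\|_{L^p}\|v\|_{L^q}$, and because $p>2$ the conjugate exponent $q=2p/(p-2)$ is finite, so the planar Sobolev embedding $H^1(\Omega_2)\hookrightarrow L^q(\Omega_2)$ yields $\|v\|_{L^q}\leq C\|v\|_{H^1}$. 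Lemma \ref{lemma-div-curl} then supplies $\|(v,w)\|_{H^1}\leq C(\|u\|_{L^2}+\|\omega\|_{L^2})$, completing $\|\mathcal{K}_U-\mathcal{K}_V\|\leq C\|U-V\|_{W^{1,p}}$.

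Assembling the two summands yields (\ref{norm-continuity}) with $C'$ depending on $b$, on $\alpha_0$, and on an a priori $W^{1,p}$ bound for the profile. The hypothesis $p>2$ enters in two genuinely different places---through $W^{1,p}\hookrightarrow L^\infty$ for the multiplication and resolvent-difference terms, and through the finiteness of the H\"older dual exponent $q$ for the derivative terms---which is precisely why the continuity statement is formulated with respect to the $W^{1,p}$ norm for $p>2$.
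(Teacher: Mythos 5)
Your proposal is correct and follows essentially the same route as the paper: the resolvent bound $\|(\lambda-\mathcal{F})^{-1}\|\le 1/\operatorname{Re}\lambda$ from skew-adjointness plus compactness of $\mathcal{K}$ for (i), and for (ii) the same resolvent-identity splitting (the paper's version merely attaches $\mathcal{K}_U$ to the $\mathcal{F}$-difference term instead of $\mathcal{K}_V$), with $\|\mathcal{F}_U-\mathcal{F}_V\|\le\alpha_0\|U-V\|_{L^\infty}\le C\|U-V\|_{W^{1,p}}$ and the bound on $\mathcal{K}_U-\mathcal{K}_V$ obtained exactly as you do, via H\"older with exponent $2p/(p-2)$, the embedding $H^1(\Omega_2)\hookrightarrow L^{2p/(p-2)}$, and the profile-independent div-curl estimate of Lemma \ref{lemma-div-curl}. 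Your observation about the two distinct roles of $p>2$ is a nice articulation of what the paper's estimates implicitly use.
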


\begin{proof}
Since $\mathcal{F}$ is skew-adjoint, for any $\lambda$ with $\operatorname{Re}%
\lambda>0$, $\left(  \lambda-\mathcal{F}\right)  ^{-1}$ is bounded and
$\mathcal{M}$ is compact in $\left(  L^{2}\left(  \Omega_{2}\right)  \right)
^{2}$ by Lemma \ref{lemma-property-A}. Since $\mathcal{F}$ generates an
unitary group and
\[
-\left(  \lambda-\mathcal{F}\right)  ^{-1}=\int_{0}^{+\infty}e^{-\left(
\lambda-\mathcal{F}\right)  t}dt,
\]
so $\mathcal{M}\left(  \lambda,U\right)  $ is analytic in $\lambda$ in the
half-plane $\mathbf{C}^{+}.$ To show (ii), we write
\begin{align*}
&  \mathcal{M}\left(  \lambda,U\right)  -\mathcal{M}\left(  \lambda,V\right)
\\
&  =\left(  \lambda-\mathcal{F}\left(  \lambda,U\right)  \right)
^{-1}\mathcal{K}\left(  \lambda,U\right)  -\left(  \lambda-\mathcal{F}\left(
\lambda,V\right)  \right)  ^{-1}\mathcal{K}\left(  \lambda,V\right) \\
&  =\left(  \lambda-\mathcal{F}\left(  \lambda,U\right)  \right)  ^{-1}\left(
\mathcal{F}\left(  \lambda,U\right)  -\mathcal{F}\left(  \lambda,V\right)
\right)  \left(  \lambda-\mathcal{F}\left(  \lambda,V\right)  \right)
^{-1}\mathcal{K}\left(  \lambda,U\right) \\
&  +\left(  \lambda-\mathcal{F}\left(  \lambda,V\right)  \right)  ^{-1}\left(
\mathcal{K}\left(  \lambda,U\right)  -\mathcal{K}\left(  \lambda,V\right)
\right) \\
&  =I+II.
\end{align*}
When $\operatorname{Re}\lambda\geq b,$ $\left\Vert \left(  \lambda
-\mathcal{F}\left(  \lambda,U\right)  \right)  ^{-1}\right\Vert \leq\frac
{1}{b}$. Both $\mathcal{K}\left(  \lambda,U\right)  $ and $\mathcal{F}\left(
\lambda,U\right)  $ are norm continuous to $U$ in the norm $\left\Vert
U\right\Vert _{W^{1,p}}$, which we show below. We use $C$ for a generic
constant$.$ First,
\[
\left\Vert \mathcal{F}\left(  \lambda,U\right)  -\mathcal{F}\left(
\lambda,V\right)  \right\Vert \leq\alpha_{0}\left\Vert U-V\right\Vert
_{L^{\infty}}\leq C\left\Vert U-V\right\Vert _{W^{1,p}}.
\]
Second, for any $\left(  u,\omega\right)  \in\left(  L^{2}\left(  \Omega
_{2}\right)  \right)  ^{2},$we have
\begin{align*}
&  \left\Vert \left(  \mathcal{K}\left(  \lambda,U\right)  -\mathcal{K}\left(
\lambda,V\right)  \right)  \left(
\begin{array}
[c]{c}%
u\\
\omega
\end{array}
\right)  \right\Vert _{L^{2}}\\
&  \leq C\left(  \left\Vert U-V\right\Vert _{L^{\infty}}\left\Vert
u\right\Vert _{L^{2}}+\left\Vert \nabla\left(  U-V\right)  \right\Vert
_{L^{p}}\left\Vert \left(
\begin{array}
[c]{c}%
v\\
w
\end{array}
\right)  \right\Vert _{L^{\frac{2p}{p-2}}}\right) \\
&  \leq C\left(  \left\Vert U-V\right\Vert _{L^{\infty}}\left\Vert
u\right\Vert _{L^{2}}+\left\Vert \nabla\left(  U-V\right)  \right\Vert
_{L^{p}}\left\Vert \left(
\begin{array}
[c]{c}%
v\\
w
\end{array}
\right)  \right\Vert _{H^{1}}\right) \\
&  \leq C\left\Vert U-V\right\Vert _{W^{1,p}}\left(  \left\Vert \left(
\begin{array}
[c]{c}%
u\\
\omega
\end{array}
\right)  \right\Vert _{L^{2}}\right)
\end{align*}
and thus
\[
\left\Vert \left(  \mathcal{K}\left(  \lambda,U\right)  -\mathcal{K}\left(
\lambda,V\right)  \right)  \right\Vert _{L^{2}}\leq C\left\Vert U-V\right\Vert
_{W^{1,p}}.
\]
So both $I$ and $II$ above are uniformly controlled by $\left\Vert
U-V\right\Vert _{W^{1,p}}$. This proves (\ref{norm-continuity}).
\end{proof}

The proof of Theorem \ref{tm-robust} uses the following Lemma of Steinberg
\cite{steinberg}.

\begin{lemma}
\label{lemma-steinberg}Let $T(\lambda;s)$ be a family of compact operators on
a Banach space analytic in $\lambda$ and jointly continuous in $(\lambda;s)$
for each $(\lambda;s)\in\Lambda\times$ $S$, where $\Lambda\ $is an open set in
$\mathbb{C}$ and $S$ is an interval in $\mathbf{R}$. If for each $s$ there
exists a $\lambda\ $such that $I-T(\lambda;s)$ is invertible, then $\left(
I-T(\lambda;s)\right)  ^{-1}$ is meromorphic in $\lambda$ for each $s$ and the
poles of $\left(  I-T(\lambda;s)\right)  ^{-1}$ depend continuously on $s$ and
can appear or disappear only at the boundary of $\Lambda\ $or at infinity.
\end{lemma}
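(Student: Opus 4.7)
The plan is to first establish meromorphicity in $\lambda$ for each fixed $s$ via the classical analytic Fredholm theorem, then upgrade to continuous dependence on $s$ by localizing near each prospective pole to a scalar determinant on a \emph{fixed} finite-dimensional subspace and invoking Rouch\'e.

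For the meromorphicity at a fixed $s_0 \in S$, the family $T(\cdot; s_0)$ is analytic in $\lambda$ with values in compact operators on the Banach space, and by hypothesis $I - T(\lambda_*; s_0)$ is invertible at some $\lambda_* \in \Lambda$. This is precisely the setup of the analytic Fredholm theorem, so $(I - T(\lambda; s_0))^{-1}$ is meromorphic on $\Lambda$ with poles of finite order. I would either quote this result or reprove it by the standard argument: approximate $T$ by $T = F + K$ with $F$ finite rank and $\|K\| < 1/2$, reduce invertibility of $I - T$ to that of the finite-rank perturbation $I - (I-K)^{-1}F$, which further reduces to nonvanishing of a scalar analytic determinant; the invertibility hypothesis ensures this determinant is not identically zero, so its zeros are isolated.

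The key new step is to localize around a pole $\lambda_0$ at parameter $s_0$ in a way compatible with joint continuity in $(\lambda; s)$. By compactness of $T(\lambda_0; s_0)$, choose a finite-dimensional subspace $X_0$ and a finite-rank projection $P_0$ onto it with $\|(I - P_0) T(\lambda_0; s_0)\| < 1/4$. Joint continuity then furnishes a closed disc $\overline{D}_r(\lambda_0) \subset \Lambda$ and an open interval $V \ni s_0$ on which $\|(I - P_0) T(\lambda; s)\| < 1/2$. Writing $K(\lambda; s) = (I - P_0) T$ and $F(\lambda; s) = P_0 T$, we obtain the factorization
\[
I - T(\lambda; s) = \bigl(I - K(\lambda; s)\bigr)\bigl(I - (I - K(\lambda; s))^{-1} F(\lambda; s)\bigr),
\]
so invertibility of $I - T$ is equivalent to nonvanishing of $\Delta(\lambda; s) := \det\bigl(I_{X_0} - P_0 (I - K(\lambda; s))^{-1} F(\lambda; s)|_{X_0}\bigr)$. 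This scalar $\Delta$ is analytic in $\lambda$ and jointly continuous in $(\lambda; s)$ on $\overline{D}_r(\lambda_0) \times V$, and its zeros inside $D_r(\lambda_0)$ coincide, with multiplicities, with the poles of $(I - T(\lambda; s))^{-1}$. Shrink $r$ so that $\Delta(\cdot; s_0)$ vanishes only at $\lambda_0$ inside the disc; then $|\Delta(\cdot; s_0)|$ has a positive minimum $m$ on the boundary circle, joint continuity gives a smaller interval $V' \subset V$ on which $|\Delta(\cdot; s)| \geq m/2$ on that circle, and $\Delta(\cdot; s) \to \Delta(\cdot; s_0)$ uniformly on the disc as $s \to s_0$. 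Rouch\'e then forces $\Delta(\cdot; s)$ to have the same total number of zeros inside $D_r(\lambda_0)$ as $\Delta(\cdot; s_0)$; letting $r \to 0$ yields continuous dependence of the poles on $s$, and the same Rouch\'e count rules out poles spontaneously appearing at or vanishing into an interior point, so they can only enter or leave through $\partial \Lambda$ or at infinity.

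The main obstacle is the localization: one needs a splitting $T = F + K$ whose finite-rank part $F$ has range in a \emph{fixed} subspace (so that $\det$ is intrinsically defined) while $\|K\|$ stays small \emph{uniformly} in a neighborhood of $(\lambda_0; s_0)$. A splitting chosen pointwise in $(\lambda; s)$ would let $X_0$ wander with the parameters and destroy the determinantal reduction. The resolution is to extract $P_0$ from compactness of the single base operator $T(\lambda_0; s_0)$ and then propagate the smallness of $(I - P_0) T$ to a neighborhood using joint continuity; this is what makes $\Delta$ simultaneously analytic in $\lambda$ and jointly continuous in $(\lambda; s)$, which is exactly what the Rouch\'e step consumes.
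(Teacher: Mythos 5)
The paper offers no proof of this lemma at all --- it is quoted from Steinberg \cite{steinberg} --- so there is no internal argument to compare against; your reconstruction (analytic Fredholm theorem for each fixed $s$, then a local reduction to a scalar determinant on a fixed finite-dimensional subspace, then Rouch\'e in $\lambda$ with joint continuity in $s$) is the standard architecture for exactly this kind of statement, and the Rouch\'e/continuity superstructure is fine as sketched.

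There is, however, a concrete error at the step you yourself identify as the crux. With $F=P_0T$ and $K=(I-P_0)T$, your factorization $I-T=\bigl(I-K\bigr)\bigl(I-(I-K)^{-1}F\bigr)$ is correct, but the finite-rank factor $(I-K)^{-1}F$ has range in $(I-K)^{-1}X_0$, which moves with $(\lambda;s)$; compressing it back to $X_0$ by $P_0$ and setting $\Delta=\det\bigl(I_{X_0}-P_0(I-K)^{-1}F|_{X_0}\bigr)$ does \emph{not} characterize invertibility of $I-T$. A two-dimensional check: take $X=\mathbb{C}^2$, $X_0=\mathrm{span}(e_1)$, $Te_1=ae_1+ce_2$, $Te_2=be_1+de_2$ with $c,d$ tiny (so $\|(I-P_0)T\|<1/4$) and $b$ large. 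Then your determinant evaluates to $1-a$, whereas $\det(I-T)=(1-a)(1-d)-bc$: choosing $a=1$, $bc\neq 0$ gives a spurious zero of your $\Delta$ at a point where $I-T$ is invertible, and choosing $(1-a)(1-d)=bc$ with $a\neq 1$ gives a singular point that your $\Delta$ misses. Since the whole Rouch\'e step consumes the identification ``zeros of $\Delta$ $=$ poles of $(I-T)^{-1}$'', the proof as written breaks here. The fix is a one-line change: use the other factorization $I-T=\bigl(I-F(I-K)^{-1}\bigr)\bigl(I-K\bigr)$. The operator $F(I-K)^{-1}=P_0T(I-K)^{-1}$ has range in the \emph{fixed} subspace $X_0$, so for $\|K\|<1/2$ invertibility of $I-T$ is equivalent to nonvanishing of $\Delta(\lambda;s)=\det\bigl(I_{X_0}-(P_0T(I-K)^{-1})|_{X_0}\bigr)$ (equivalently, apply $\det(I-AB)=\det(I-BA)$ with $B=P_0T\colon X\to X_0$ and $A=(I-K)^{-1}\iota\colon X_0\to X$); in the example this gives $1-a-bc/(1-d)$, which vanishes exactly where $\det(I-T)$ does. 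With this corrected $\Delta$, which is analytic in $\lambda$ and jointly continuous in $(\lambda;s)$ for the same reasons you give, your argument goes through. Two small finishing touches: the pole order of $(I-T)^{-1}$ need not equal the zero order of $\Delta$, and you only need equality of the zero and pole \emph{sets} together with stability of the zero count, so phrase it that way; and appearance of poles away from existing ones is excluded more simply by openness of invertibility under the jointly norm-continuous family, which complements the Rouch\'e count near existing poles.
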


\begin{proof}
[Proof of Theorem \ref{tm-robust}]By definition, $\lambda$ is an unstable
eigenvalue of $\mathcal{A}$ if and only if $1$ is an eigenvalue of
$\mathcal{M}\left(  \lambda,U\right)  $. For $\operatorname{Re}\lambda>0$ and
$0\leq s\leq1$, define
\[
\mathcal{T}\left(  \lambda,s\right)  =\left(  1-s\right)  \mathcal{M}\left(
\lambda,U_{0}\right)  +s\mathcal{M}\left(  \lambda,U\right)  .
\]
By Lemma \ref{lemma-property-M}, these operators are compact, analytic in
$\lambda$ and continuous in $s$. The assumption on $U_{0}\left(  y\right)  $
implies that $\lambda_{0}=-i\alpha_{0}c_{0}$ is a pole of $\left(
I-\mathcal{T}\left(  \lambda,0\right)  \right)  ^{-1}$ in the right half
plane. Since the poles of $\left(  I-\mathcal{T}\left(  \lambda,0\right)
\right)  ^{-1}$ are discrete, we can choose $\varepsilon_{0}$ so small such
that on the circle $\Gamma=\left\{  \lambda\ |\ \left\vert \lambda-\lambda
_{0}\right\vert \leq\varepsilon_{0}\right\}  $, the inverse $\left(
I-\mathcal{T}\left(  \lambda,0\right)  \right)  ^{-1}$ exists. By the
continuity property (\ref{norm-continuity}),
\[
\left\Vert \mathcal{T}\left(  \lambda,s\right)  -\mathcal{T}\left(
\lambda,0\right)  \right\Vert \leq s\left\Vert \mathcal{M}\left(
\lambda,U_{0}\right)  -\mathcal{M}\left(  \lambda,U\right)  \right\Vert \leq
C\left\Vert U-U_{0}\right\Vert _{W^{1,p}}.
\]
Thus when $\left\Vert U-U_{0}\right\Vert _{W^{1,p}}$ is sufficiently small,
$\left(  I-\mathcal{T}\left(  \lambda,s\right)  \right)  ^{-1}$ also exists on
the circle $\Gamma$ for all $s\in\left[  0,1\right]  $. Thus by Lemma
\ref{lemma-steinberg}, there exists a pole $\lambda_{1}$ of $\left(
I-\mathcal{T}\left(  \lambda,1\right)  \right)  ^{-1}=\left(  I-\mathcal{M}%
\left(  \lambda,U\right)  \right)  ^{-1}$ within the disk $\left\{  \left\vert
\lambda-\lambda_{0}\right\vert <\varepsilon_{0}\right\}  $. For $\varepsilon
_{0}$ small, $\lambda_{1}$ also has positive real part and is an unstable
eigenvalue of the operator $\mathcal{A}$ associated with $U\left(  y,z\right)
$. Let $\left(  u,\omega\right)  \in\left(  L^{2}\left(  \Omega_{2}\right)
\right)  ^{2}\ $be the corresponding eigenfunction and $\left(  v,w\right)
\in\left(  H^{1}\left(  \Omega_{2}\right)  \right)  ^{2}$ is determined by
$\left(  u,\omega\right)  $ as in the definition of $\mathcal{K}$. Define
\[
c=\frac{\lambda_{1}}{-i\alpha_{0}},\ P=\frac{1}{-i\alpha_{0}}\left(
i\alpha_{0}\left(  U-c\right)  u+vU_{y}+wU_{z}\right)  .
\]
We check that $\left(  u,v,w,P\right)  $ solves the growing mode equations
(\ref{eqn-growing-u})-(\ref{eqn-growing-bc}). First, an integration of the
equation $\mathcal{A}u=\lambda_{1}u$ implies that $\int\int_{\Omega_{2}%
}u\ dydz=0$, namely $\mathcal{Q}u=u$. So (\ref{eqn-growing-div}) follows from
(\ref{eqn-Q-div}). The equations (\ref{eqn-growing-u}) and
(\ref{eqn-growing-bc}) are already implied in our construction. To check
(\ref{eqn-growing-v,w}), first we note that the vector field $i\alpha
_{0}\left(  U-c\right)  \left(  v,w\right)  $ is curl free because of the
equation (\ref{eqn-growing-vorticity}) which follows from $\mathcal{A}%
\omega=\lambda_{1}\omega$. So there exists a scalar function $P^{\prime
}\left(  y,z\right)  $ such that
\[
i\alpha_{0}\left(  U-c\right)  \left(  v,w\right)  =-\left(  P_{x}^{\prime
},P_{y}^{\prime}\right)  .
\]
Taking divergence of above and using (\ref{eqn-growing-div}), we can set
\[
P^{\prime}=\left(  -\Delta_{2}\right)  _{N}^{-1}\left(  \alpha_{0}^{2}\left(
U-c\right)  u+i\alpha_{0}\left(  U_{y}v+U_{z}w\right)  \right)
\]
by modulating a constant. It remains to show (\ref{formula1-P}), from which
(\ref{eqn-growing-v,w}) follows. We note that $\mathcal{A}u=\lambda_{1}u$
implies (\ref{eqn-u}). Combining with $\mathcal{Q}u=u$, we get
(\ref{formula2-P}), an equivalent form of (\ref{formula1-P}).

To show the regularity of the growing mode, we look at the eigenfunction
equation
\[
\mathcal{M}\left(
\begin{array}
[c]{c}%
u\\
\omega
\end{array}
\right)  =\left(  \lambda_{1}-\mathcal{F}\right)  ^{-1}\mathcal{K}\left(
\begin{array}
[c]{c}%
u\\
\omega
\end{array}
\right)  =\left(
\begin{array}
[c]{c}%
u\\
\omega
\end{array}
\right)  .
\]
The eigenfunction $\left(  u,\omega\right)  \in\left(  L^{2}\left(  \Omega
_{2}\right)  \right)  ^{2}$ implies that $\left(  v,w\right)  \in\left(
H^{1}\left(  \Omega_{2}\right)  \right)  ^{2}$ and thus $\mathcal{K}\left(
\begin{array}
[c]{c}%
u\\
\omega
\end{array}
\right)  \in\left(  H^{1}\left(  \Omega_{2}\right)  \right)  ^{2}.$ Since
$\left(  \lambda_{1}-\mathcal{F}\right)  ^{-1}$ is regularity preserving, we
have $\left(  u,\omega\right)  \in\left(  H^{1}\left(  \Omega_{2}\right)
\right)  ^{2}$. If $U\in C^{\infty}$, we can repeat this process to deduce
that $\left(  u,\omega\right)  \in\left(  H^{k}\left(  \Omega_{2}\right)
\right)  ^{2}$ for any $k\geq1$ and therefore $\left(  u,\omega\right)
\in\left(  C^{\infty}\left(  \Omega_{2}\right)  \right)  ^{2}$. Then
$u,v,w,P\in C^{\infty}\left(  \Omega_{2}\right)  $. This finishes the proof of
Theorem \ref{tm-robust}.
\end{proof}

\begin{remark}
In Theorem \ref{tm-robust}, the smallness of $\left\Vert U\left(  y,z\right)
-U_{0}\left(  y\right)  \right\Vert _{W^{1,p}\left(  \Omega_{2}\right)  }$
$\left(  p>2\right)  $ is required to show persistence of instability. For 2D
shears, i.e. $U=U\left(  y\right)  $, we can show that: if $\left\Vert
U\left(  y\right)  -U_{0}\left(  y\right)  \right\Vert _{H^{1}\left(
0,1\right)  }$ is small enough, then linear instability of $U_{0}\left(
y\right)  $ implies that of $U\left(  y\right)  $. This is an improvement over
the result in \cite{friedlander-et-robust}, where the smallness in $C^{2}$
norm is required to prove persistence of instability for 2D Euler flows
without hyperbolic points. The proof is very similar to the 3D case, although
much simpler. So we only sketch it briefly. In the 2D case, we reformulate the
growing mode problem in terms of $u$. Using the notations as before, from the
linearized equations we derive
\[
\lambda u=-i\alpha_{0}Uu+i\alpha_{0}\int_{0}^{1}Uu\ dy+\left(  1+\alpha
_{0}^{2}\mathcal{B}\right)  ^{-1}\left(  \alpha_{0}^{2}\mathcal{B}%
-\mathcal{Q}\right)  \left(  U_{y}v\right)  .
\]
Here, $\mathcal{Q\,}$\ is the projector from $L^{2}\left(  0,1\right)  $ to
mean zero space and $\mathcal{B=Q}\left(  -\frac{d^{2}}{dy^{2}}\right)
_{N}^{-1}\mathcal{Q}$ where $\left(  -\frac{d^{2}}{dy^{2}}\right)  _{N}^{-1}$
is the mean zero solution operator of the 1D Neumann problem$.$The mapping
$u\rightarrow v$ is defined by solving the ODE
\[
\frac{dv}{dy}=-i\alpha_{0}\mathcal{Q}u,\ v\left(  0\right)  =v\left(
1\right)  =0.
\]
The rest of the proof is the same as in the 3D case, so we skip it. Smallness
of $\left\Vert U\left(  y\right)  -U_{0}\left(  y\right)  \right\Vert
_{H^{1}\left(  0,1\right)  }$ is required in the proof because of the Sobolev
embedding $H^{1}\left(  0,1\right)  \hookrightarrow L^{\infty}\left(
0,1\right)  .$
\end{remark}

\section{Appendix}

\begin{proof}
[Proof of the claim $\lambda_{2}\geq0$ in Remark \ref{remark-inviscid-insta}
3):]By the Sturm-Liouville theorem \cite{Zet05}, the linear operator $L$,
\[
L\varphi=-\varphi^{\prime\prime}+Q(y)\varphi,\quad\varphi(0)=\varphi(1)=0
\]
has a sequence of real eigenvalues
\[
\lambda_{1}<\lambda_{2}<\lambda_{3}<\cdots\rightarrow+\infty,
\]
and there is a unique eigenfunction $\varphi_{n}$ corresponding to each
eigenvalue $\lambda_{n}$, which has exactly $n-1$ zeros in $(0,1)$. Moreover,
$\{\varphi_{n}\}$ form an orthonormal base of $L_{[0,1]}^{2}$ under the
boundary condition $\varphi(0)=\varphi(1)=0$. Thus
\[
\lambda_{1}=\min_{\varphi}\frac{\langle L\varphi,\varphi\rangle}%
{\langle\varphi,\varphi\rangle},
\]
where $\langle\ ,\ \rangle$ denotes $L^{2}$ inner product. In the case of
Theorem \ref{OST}, we also know that $\lambda_{2}\geq0$ by the following
argument. Suppose otherwise, $\lambda_{2}<0$ and let $\varphi_{2}$ be its
corresponding eigenfunction. Then $\varphi_{2}$ has one zero $z\in(0,1)$. Let
$\phi=U(y)-U(1/2)$, then
\begin{equation}
-\phi^{\prime\prime}+Q\phi=0. \label{phe1}%
\end{equation}
Also
\begin{equation}
-\varphi_{2}^{\prime\prime}+Q\varphi_{2}=\lambda_{2}\varphi_{2}. \label{phe2}%
\end{equation}
Multiplying (\ref{phe2}) by $\phi$ and (\ref{phe1}) by $\varphi_{2}$, then
subtracting, we have
\begin{equation}
-\varphi_{2}^{\prime\prime}\phi+\phi^{\prime\prime}\varphi_{2}=\lambda
_{2}\varphi_{2}\phi. \label{phe3}%
\end{equation}
If $z\leq1/2$, we use the interval $[0,z]$; otherwise we use the interval
$[z,1]$. Without loss of generality, we assume $z\leq1/2$. Integrating
(\ref{phe3}) over the interval $[0,z]$, we have
\begin{equation}
-\left(  \varphi_{2}^{\prime}\phi\right)  |_{0}^{z}=\lambda_{2}\int_{0}%
^{z}\varphi_{2}\phi dy. \label{phe4}%
\end{equation}
On the interval $[0,z]$, neither $\phi$ nor $\varphi_{2}$ changes sign. We
note that $\varphi_{2}^{\prime}(0)$ has the same sign with $\varphi_{2}$ on
[$0,z$], while $\varphi_{2}^{\prime}(z)$ has the opposite sign. Then the right
hand side and the left hand side of (\ref{phe4}) have opposite signs. A
contradiction is reached (even in the case $\varphi_{2}^{\prime}(0)=0$ and/or
$\varphi_{2}^{\prime}(z)=0$). Thus $\lambda_{2}\geq0$.
\end{proof}

The following lemma is used in the proof of Theorem \ref{tm-robust}.

\begin{lemma}
\label{lemma-div-curl}Given $\left(  f_{1},f_{2}\right)  \in\left(
L^{2}\left(  \Omega_{2}\right)  \right)  ^{2}$ and $\int\int_{\Omega_{2}}%
f_{1}\ dydz=0$. Then there exists a unique vector field $\vec{u}=\left(
v,w\right)  \in$ $\left(  H^{1}\left(  \Omega_{2}\right)  \right)  ^{2}$ such
that
\begin{equation}
\operatorname{div}\vec{u}=v_{y}+w_{z}=f_{1}, \label{div-lemma}%
\end{equation}%
\begin{equation}
\operatorname{curl}\vec{u}=w_{y}-v_{z}=f_{2}, \label{curl-lemma}%
\end{equation}%
\begin{equation}
\oint_{\left\{  y=0\right\}  }\vec{u}\cdot dl=\oint_{\left\{  y=1\right\}
}\vec{u}\cdot dl=0, \label{circulation-lemma}%
\end{equation}
and%
\begin{equation}
v\left(  0,z\right)  =v\left(  1,z\right)  =0\text{.}
\label{zero-normal-lemma}%
\end{equation}
Moreover,
\begin{equation}
\left\Vert \vec{u}\right\Vert _{H^{m+1}}\leq C\left(  \left\Vert
f_{1}\right\Vert _{H^{m}}+\left\Vert f_{2}\right\Vert _{H^{m}}\right)
\label{estimate-lemma}%
\end{equation}
for any $m\geq0$.
\end{lemma}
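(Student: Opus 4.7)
The plan is to reduce the div--curl system to two decoupled Poisson problems via a scalar-potential ansatz, using a one-parameter harmonic correction to fix the circulations. Write
\[
v = \phi_y - \psi_z, \qquad w = \phi_z + \psi_y + c,
\]
with $\phi,\psi$ both $L_z$-periodic in $z$ and $c \in \mathbb{R}$ a constant. A direct computation gives $\operatorname{div}\vec{u} = \Delta_2 \phi$ and $\operatorname{curl}\vec{u} = \Delta_2 \psi$ (independent of $c$), so (\ref{div-lemma})--(\ref{curl-lemma}) split into the two scalar equations $\Delta_2 \phi = f_1$ and $\Delta_2 \psi = f_2$. The strategy is to choose boundary conditions on $\phi$ and $\psi$ that force (\ref{zero-normal-lemma}) automatically, and then fix the single constant $c$ so that (\ref{circulation-lemma}) holds.

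Solve $\phi$ as the Neumann problem $\Delta_2 \phi = f_1$ with $\partial_y \phi = 0$ on $\{y=0,1\}$ and $z$-periodic. The compatibility condition is exactly $\int\int_{\Omega_2} f_1\,dy\,dz = 0$, which is our hypothesis, so standard elliptic theory supplies a unique (mod constants) $\phi \in H^{m+2}$ with $\|\phi\|_{H^{m+2}} \le C\|f_1\|_{H^m}$. Solve $\psi$ as the Dirichlet problem $\Delta_2 \psi = f_2$ with $\psi = 0$ on $\{y=0,1\}$, which is unconditionally solvable with the analogous bound. By construction $\phi_y$ and $\psi_z$ both vanish on $\partial \Omega_2$, so $v = \phi_y - \psi_z = 0$ there, giving (\ref{zero-normal-lemma}). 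Since $\phi$ is $z$-periodic, the circulation at $y = 0$ reads $\oint_{y=0} \vec{u}\cdot dl = \int_0^{L_z} \psi_y(0,z)\,dz + c L_z$; choose $c := -L_z^{-1}\int_0^{L_z} \psi_y(0,z)\,dz$ to annihilate it.

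It remains to verify that the circulation at $\{y=1\}$ vanishes for this same $c$. Integrating $\Delta_2 \psi = f_2$ over $\Omega_2$ and using $z$-periodicity gives $\int \psi_y(1,z)\,dz - \int \psi_y(0,z)\,dz = \int\int_{\Omega_2} f_2\,dy\,dz$, and applying Stokes' theorem to the hypothesized zero circulations (using $z$-periodicity again) forces $\int\int f_2 = 0$ as a necessary consequence of the data, so the chosen $c$ kills both circulations simultaneously. Uniqueness of $\vec{u}$ follows by feeding zero data through the same construction: a homogeneous solution must be $\nabla \Phi$ for a harmonic $\Phi$ that is $z$-periodic, satisfies $\partial_y \Phi = 0$ on $\partial \Omega_2$, and has vanishing $z$-means of $\Phi_z$ on both boundaries, hence is constant, so $\vec{u} \equiv 0$. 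The Sobolev estimate (\ref{estimate-lemma}) is inherited directly from the elliptic bounds on $\phi$ and $\psi$. The main obstacle is this final bookkeeping---distributing four scalar side conditions (div, curl, normal trace, and two circulations) across two scalar potentials and one free constant and recognizing that the second circulation condition is automatic by virtue of the Stokes identity forced on the data.
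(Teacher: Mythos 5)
Your reduction to two decoupled scalar problems (Neumann for $\phi$, Dirichlet for $\psi$, plus the free constant $c$) is correct as far as it goes, but the step that handles the circulation at $\{y=1\}$ contains a genuine gap. You assert that Stokes' theorem applied to ``the hypothesized zero circulations'' forces $\int\int_{\Omega_{2}}f_{2}\,dydz=0$ ``as a necessary consequence of the data.'' That is circular: the conditions (\ref{circulation-lemma}) are requirements on the solution you are trying to construct, not hypotheses on $(f_{1},f_{2})$; the lemma only assumes $\int\int_{\Omega_{2}}f_{1}\,dydz=0$. On the periodic strip, Green's theorem (the $v_{z}$ contribution drops by $z$-periodicity) gives, for any field satisfying (\ref{curl-lemma}),
\[
\int\int_{\Omega_{2}}f_{2}\,dydz=\oint_{\{y=1\}}\vec{u}\cdot dl-\oint_{\{y=0\}}\vec{u}\cdot dl,
\]
so after you choose $c$ to annihilate the circulation at $\{y=0\}$, the circulation at $\{y=1\}$ equals $\int\int_{\Omega_{2}}f_{2}\,dydz$, which under the stated hypotheses need not vanish. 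Thus your construction (indeed, any construction) fails for general $f_{2}\in L^{2}$, and your argument is only complete under the additional compatibility hypothesis $\int\int_{\Omega_{2}}f_{2}\,dydz=0$. This should be stated as an assumption (it is in fact an implicit requirement of the lemma itself; the paper's own proof hides the same issue inside the citation to Marchioro--Pulvirenti, since on an annulus-like domain only one boundary circulation can be prescribed freely once the total vorticity is fixed), rather than presented as something derived from the data.

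Apart from this, your route differs from the paper's in a worthwhile way: the paper splits $\vec{u}=\vec{u}_{1}+\vec{u}_{2}$ with the divergence-free part taken from Marchioro--Pulvirenti and the gradient part from a Neumann problem, and then proves (\ref{estimate-lemma}) by combining Temam's div--curl inequality with a compactness/contradiction argument to control $\Vert\vec{u}\Vert_{L^{2}}$; your potential ansatz yields the estimate directly from elliptic regularity of the Dirichlet and Neumann problems and is more self-contained. Two further points to tighten: in the uniqueness step, $\Omega_{2}$ is not simply connected, so an irrotational homogeneous solution is $\nabla\Phi$ with $\Phi$ $z$-periodic only \emph{because} its circulation vanishes --- that is exactly where (\ref{circulation-lemma}) enters and it should be invoked explicitly (otherwise the constant field $(0,1)$ is a counterexample to the bare claim); and the Neumann solution $\phi$ should be fixed, say, by requiring zero mean so that the elliptic estimate $\Vert\phi\Vert_{H^{m+2}}\leq C\Vert f_{1}\Vert_{H^{m}}$ is legitimate.
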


\begin{proof}
First, we show the existence. We split $\vec{u}=\vec{u}_{1}+\vec{u}_{2}$,
where $\vec{u}_{1}$ and $\vec{u}_{2}\ $satisfy
\[
\operatorname{div}\vec{u}_{1}=0,\ \operatorname{curl}\vec{u}_{1}=f_{2},\
\]
and
\[
\operatorname{div}\vec{u}_{2}=f_{1},\ \operatorname{curl}\vec{u}_{2}=0,\
\]
respectively, with conditions (\ref{circulation-lemma}) and
(\ref{zero-normal-lemma}). For existence and uniqueness of $\vec{u}_{1},$ we
refer to Theorem 2.2 of \cite[Chapter 1]{marchiro-pulvirenti}. We construct
$\vec{u}_{2}=\nabla\varphi$, where $\varphi$ solves the Neumann problem
\[
\Delta\varphi=f_{1}\text{ in }\Omega_{2},\ \frac{\partial\varphi}{\partial
n}=0\text{ on }\partial\Omega_{2}.
\]
The solvability is ensured by the zero mean assumption on $f_{1}$ and Fredholm
alternative principle$.$ Note that condition (\ref{circulation-lemma}) is
automatic for the gradient flow $\vec{u}_{2}$.

Second, we show uniqueness. If there exists two vectors fields $\vec{u}$ and
$\vec{u}^{\prime}$ satisfying (\ref{div-lemma})-(\ref{zero-normal-lemma}%
)$.$Then their difference $\vec{u}^{\prime\prime}=\vec{u}-\vec{u}^{\prime}$ is
an irrotational and divergence-free field, tangent to $\partial\Omega_{2}$
with zero circulations on $\partial\Omega_{2}$. By Theorem 2.1 of
\cite[Chapter 1]{marchiro-pulvirenti}, $\vec{u}^{\prime\prime}=0$ and thus
$\vec{u}=\vec{u}^{\prime}$.

It remains to prove the estimate (\ref{estimate-lemma}). For this, we use the
following general estimate (see (1.26) in \cite[Page 318]{teman-NS}): Let
$\Omega$ be an open bounded domain of $\mathbf{R}^{2}$, $m$ is an integer
$\geq1$. Then for any $\vec{u}\in H^{m}\left(  \Omega\right)  $ with $\vec
{u}\cdot n=0$ on $\partial\Omega$, there exists $c\left(  m,\Omega\right)  $
such that
\begin{equation}
\left\Vert \vec{u}\right\Vert _{H^{m}}\leq C\left(  \left\Vert
\operatorname{div}\vec{u}\right\Vert _{H^{m-1}}+\left\Vert \operatorname{curl}%
\vec{u}\right\Vert _{H^{m-1}}+\left\Vert \vec{u}\right\Vert _{L^{2}}\right)  .
\label{general-ineq-lemma}%
\end{equation}
We shall show that: for any $\vec{u}\in\left(  H^{1}\left(  \Omega_{2}\right)
\right)  ^{2}$ satisfying (\ref{circulation-lemma}) and
(\ref{zero-normal-lemma})$,$%
\begin{equation}
\left\Vert \vec{u}\right\Vert _{L^{2}\left(  \Omega_{2}\right)  }\leq
c_{0}\left(  \left\Vert \operatorname{div}\vec{u}\right\Vert _{L^{2}\left(
\Omega_{2}\right)  }+\left\Vert \operatorname{curl}\vec{u}\right\Vert
_{L^{2}\left(  \Omega_{2}\right)  }\right)  . \label{div-curl--lemma}%
\end{equation}
Then (\ref{estimate-lemma}) is obvious from (\ref{general-ineq-lemma}) and
(\ref{div-curl--lemma}). We prove (\ref{div-curl--lemma}) by a contradiction
argument. Suppose otherwise, for any $n\geq1$, there exists $\vec{u}_{n}%
\in\left(  H^{1}\left(  \Omega_{2}\right)  \right)  ^{2}$ such that
\begin{equation}
\left\Vert \vec{u}_{n}\right\Vert _{L^{2}\left(  \Omega_{2}\right)  }\geq
n\left(  \left\Vert \operatorname{div}\vec{u}_{n}\right\Vert _{L^{2}\left(
\Omega_{2}\right)  }+\left\Vert \operatorname{curl}\vec{u}_{n}\right\Vert
_{L^{2}\left(  \Omega_{2}\right)  }\right)  . \label{contra-lemma}%
\end{equation}
we normalize $\left\Vert \vec{u}_{n}\right\Vert _{L^{2}\left(  \Omega
_{2}\right)  }=1$. Then by (\ref{general-ineq-lemma}), $\left\Vert \vec{u}%
_{n}\right\Vert _{H^{1}\left(  \Omega_{2}\right)  }$ is uniformly bounded. So
$\vec{u}_{n}$ converges to $\vec{u}_{\infty}$ weakly in $H^{1}\left(
\Omega_{2}\right)  $ and strongly in $L^{2}\left(  \Omega_{2}\right)  $. Thus
$\left\Vert \vec{u}_{\infty}\right\Vert _{L^{2}\left(  \Omega_{2}\right)  }%
=1$. But from (\ref{contra-lemma}), $\vec{u}_{\infty}$ is irrotational and
divergence-free. Moreover, $\vec{u}_{\infty}$ also satisfies
(\ref{circulation-lemma}) and (\ref{zero-normal-lemma})$.$ So $\vec{u}%
_{\infty}=0$, a contradiction. This finishes the proof.
\end{proof}

\begin{center}
{\Large Acknowledgement}
\end{center}

This work is supported partly by the NSF grant DMS-0908175 (Z. Lin) and the 
DoE grant DE-FG02-06ER46307 (Y. Li).

\end{document}